\title{Algebraic $K$-theory for squares categories}
\author[Campbell]{Jonathan Campbell}
\address{Center for Communications Research - La Jolla}
\email{jonalfcam@gmail.com}
\author[Kuijper]{Josefien Kuijper}
\address{Department of Mathematics, University of Toronto}
\email{josefien.kuijper@utoronto.ca}
\author[Merling]{Mona Merling}
\address{Department of Mathematics, University of Pennsylvania}
\email{mmerling@math.upenn.edu}
\author[Zakharevich]{Inna Zakharevich}
\address{Department of Mathematics, Cornell University}
\email{zakh@math.cornell.edu}
\newcommand{\sbt}{\,\begin{picture}(-1,1)(0.5,-1)\circle*{1.8}\end{picture}\hspace{.05cm}}
\newcommand{\sO}{\mathcal{O}}
\newcommand{\B}{\mathcal{B}}
\newcommand{\Span}{\mathbf{Span}}
\newcommand{\edit}[1]{{\color{orange}{#1}} }
\renewcommand{\edit}[1]{{\color{black}{#1}} }
\renewcommand{\sout}[1]{}
\newlength{\storeparskip}
\begin{document}

\date{\today}

\begin{abstract}
In this paper we introduce a new formalism for $K$-theory, called \emph{squares $K$-theory}.  This formalism allows us to simultaneously generalize the usual three-term relation $[B] = [A] + [C]$ for an exact sequence $A \hookrightarrow B \twoheadrightarrow C$ or 
 for a subtractive sequence $A\hookrightarrow B \leftarrow C$, by defining $K_0$ of \edit{a squares category\sout{exact and subtractive categories}} to satisfy a four-term relation $[A]+[D]= [C] + [B]$ for a ``good'' square diagram with these corners. 
Examples that rely on this formalism are $K$-theory of smooth manifolds of a fixed dimension and $K$-theory of (smooth and) complete varieties.
Another application we give of this theory is  the construction of a derived motivic measure taking value in the $K$-theory of homotopy sheaves.
\end{abstract}
\maketitle

\begingroup%
\setcounter{secnumdepth}{1}
\setcounter{tocdepth}{1}
\tableofcontents
\endgroup%

\section*{Introduction}

Higher algebraic $K$-theory was originally developed by Quillen in \cite{quillen} in order to construct groups $K_i(R)$ for commutative rings.  Quillen associated to any exact category $\C$ a space $\Omega BQ\C$, and defined the groups $K_i(\C)$ to be the homotopy groups of this space.  For a ring $R$, $K_i(R)$ is simply $K_i(\mathbf{Proj}_R)$, the $K$-groups of the category of finitely generated projective $R$-modules.  Although these higher groups are generally difficult to compute (see \cite{grayson_binary} for a description involving generators and relations) the group $K_0(\C)$ is simple: it is the free abelian group generated by objects of $\C$, modulo the relation 
\[[B] = [A] + [C] \qquad \hbox{if there exists an exact sequence}\   A \rightarrow B \rightarrow C.\]
The groups $K_i(\C)$ are the ``higher'' analogs of these groups, encapsulating higher homotopical information about how objects decompose and reassemble. A variant of higher algebraic $K$-theory for ``subtractive categories", which instead encodes relations
\[[B] = [A] + [C] \qquad \hbox{if there exists a ``subtractive sequence"}\   A \rightarrow B \leftarrow C\]
was introduced in \cite{campbell} in order to give a new construction of the $K$-theory of varieties from \cite{Z-Kth-ass}.

Although three-term relations appears often, there are situations, especially in geometry, where they are insufficient.  Consider, for example, the following four-term relation on polytopes:
\[[P\cup Q] + [P \cap Q] = [P] + [Q].\]
This relation appears in McMullen's polytope algebra \cite{mcmullen}, in which decomposition and assembly of closed polytopes of mixed dimensions is considered.\footnote{This should be contrasted with the classical scissors congruence groups of polytopes, where a particular dimension of polytope is fixed and all lower-dimensional intersections are disregarded 
 \cite{dupont, sah79}. 
 This type of decomposition works well with the classic three-term relation.}   It also appears in Bittner's presentation of the Grothendieck ring of varieties \cite{bittner04}.
In such circumstances it is not possible to use a three-term relation without extensive artificial constructions (see \cite[Section 5.2]{Z-Kth-ass} for a discussion).  It is thus desirable to construct a version of $K$-theory in which four-term relations appear naturally.
 
 Another example comes from considering scissors congruence of manifolds, also called $SK$ (In German, ``schneiden und kleben," cut and paste) equivalence. Given a closed smooth oriented manifold $M$, one can cut it along a separating codimension $1$ submanifold $\Sigma$ with trivial normal bundle, and paste back the two pieces along an orientation preserving diffeomorphism $\Sigma\rightarrow \Sigma$ to obtain a new manifold, which we say is ``cut and paste equivalent" or ``scissors congruent" to $M$ \cite{KKNO}. In order to categorify the resulting scissors congruence groups of manifolds, it is crucial to consider four term relations.

In \cite{CZ-cgw} the notion of CGW-categories, which encode a certain type of four-term relation, is considered.  However, these are insufficient, as they still assume a fundamental ``three-terminess'' to the structure at hand: all morphisms are assumed to have either a kernel or a cokernel, and thus appear in an ``exact sequence'' of some type.  However, in all of the examples considered above, the problem is exactly that there are situations in which morphisms have neither a kernel nor a cokernel.  To understand how this works, consider the case of polytopes of a fixed dimension.  Given an isometric embedding of closed polytopes $P \rcofib Q$, the inclusion $\overline{Q \backslash P} \rcofib Q$ has a measure-$0$ intersection with $P$.  Since we are disregarding lower dimensional intersections, we can declare that
\[P \rcofib Q \lcofib \overline{Q \backslash P}\]
is an ``exact sequence'' and work with it analogously to the way that Quillen's construction works for the construction of $\Omega B Q\C$.  (\cite{CZ-cgw} makes this perspective rigorous.)

However, if we are considering closed polytopes of \emph{all} dimensions, this is no longer the case: $P \cap \overline{Q \backslash P} \neq \emptyset$, and so 
\[P \rcofib Q \lcofib \overline{Q \backslash P}\]
cannot be considered an exact sequence.  To understand how to fix this, we need the following observation about abelian categories: in an abelian category, the sequence
\[A \rcofib^{f+f'} B\oplus C \rfib^{g-g'} D\]
is exact if and only if the square 
\begin{diagram}
    {A & B \\ C & D\\};
    \arrowsquare{f}{f'}{g'}{g}
\end{diagram}
is \emph{stable}, i.e., is both a pushout and a pullback. The relation corresponding to this square is 
\[[A] + [D] = [B] + [C].\]
This perspective is precisely duplicated in the context of closed polytopes: the square
\begin{diagram} 
{ \overline{Q\backslash P} \cap P & \overline{Q \backslash P}  \\ P & Q \\};
\arrowsquare{}{}{}{}
\end{diagram}
is stable, and the relation given by this square is precisely the desired relationship between objects in McMullen's polytope algebra.  Similar phenomena occur with SK-invariants and with Bittner's presentation \edit{of the Grothendieck ring of varieties.}

The goal of this paper is to show how to construct the $K$-theory of categories with \emph{squares}.  This framing simultaneously generalizes the standard construction of the $K$-theory of \edit{any} Waldhausen category, of \edit{the subtractive Waldhausen (SW) category of varieties,} and of \edit{any} CGW-category, \edit{but it has the advantage that it} also works with the above geometric examples.  This approach has already produced new models of $K$-theories of manifolds which lift the scissors congruence (SK) groups, as investigated in \cite{WITMona, WIT2}. 

Squares $K$-theory also seems to be the right approach to study the $K$-theory spectrum of the category of varieties $K(\Var_k)$. We can define a squares category of complete varieties and show that $K(\Comp_k)$ is a retract of $K(\Var_k)$. Moreover we can define the $K$-theory spectrum of the category of smooth and complete varieties $K(\SmComp_k)$, raising the question of whether a spectrum-level analogue of Bittner's presentation \cite{bittner04} holds. In contrast, in the framework of assemblers, SW-categories or CGW-categories, the category of (smooth and) complete varieties is out of reach, since an open subvariety of a complete variety is not complete. As another application, we show that the definition of $K(\Var_k)$ using squares categories allows for the construction of a new derived motivic measure, taking values in the $K$-theory spectrum of the stable $\infty$-category of ``abstract compactly supported cohomology theories.'' (See Section~\ref{sec:derived}.) 

\subsection*{Acknowledgements}  The authors are grateful to Anna Marie Bohmann, Thomas Barnet-Lamb, Tony Elmendorf, Mike Mandell, Jack Morava, Dan Petersen, George Raptis, and Julia Semikina for enlightening discussions and insights regarding aspects of this paper.  We are also grateful to the anonymous referee for their helpful comments.

Two of the authors were partially supported by NSF grants for the Focused Research Group project Trace Methods and Applications for Cut-and-Paste $K$-theory, under the grant numbers DMS-2052988 (Merling) and DMS-2052977 (Zakharevich). Additionally, Merling was partially supported by NSF grant DMS-1943925 and Zakharevich was partially supported by NSF grant DMS-1846767. Kuijper was supported by ERC grant  ERC-2017-STG 759082.

\section{Squares categories}

In this section we introduce categories which have a notion of distinguished squares and a designated basepoint object, which we will call \emph{squares categories}. Then we define a $K$-theory construction for such categories, which breaks up the cut-and-paste data encoded by the squares.

\begin{definition}
    A \emph{simple double category} is a small double category where the $2$-cells are uniquely determined by their boundaries.  More rigorously, a simple double category $\C$ consists of:
    \begin{itemize}
        \item A pair of categories $(\E_\C,\M_\C)$ which have the same objects as $\C$. We denote morphisms in $\E_\C$ by $\rfib$, and morphisms in $\M_\C$ by \edit{$\rcofib$, when we want to emphasize in which of the two categories a morphism lives.}
        \item A subset of tuples of maps
        \[\mathscr{D} \subseteq \coprod_{A,B,C,D\in \ob\C} \Hom_{\M_\C}(A,B) \times \Hom_{\M_\C}(C,D) \times \Hom_{\E_\C}(A,C) \times \Hom_{\E_\C}(B,D).\]
        The tuples in $\mathscr{D}$ are the \emph{distinguished squares}, which are drawn
        \begin{diagram}
            { A & B \\ C & D. \\};
            \cofib{1-1}{1-2}^f \cofib{2-1}{2-2}^{f'}
            \fib{1-1}{2-1}_g \fib{1-2}{2-2}^{g'}
        \end{diagram}
    \end{itemize} 
    Distinguished squares compose both vertically and
    horizontally, in the sense that if each of the component squares is distinguished, so is the outside:
    \[\begin{inline-diagram} 
    {A & B \\ C & D \\ E & F \\};
    \fib{1-1}{2-1} \fib{2-1}{3-1}
    \fib{1-2}{2-2} \fib{2-2}{3-2}
    \cofib{1-1}{1-2} \cofib{2-1}{2-2} \cofib{3-1}{3-2}
    \end{inline-diagram} \qquad 
    \begin{inline-diagram}
    {A & B & C \\ D & E & F\\};
    \cofib{1-1}{1-2} \cofib{1-2}{1-3}
    \cofib{2-1}{2-2} \cofib{2-2}{2-3}
    \fib{1-1}{2-1} \fib{1-2}{2-2} \fib{1-3}{2-3}
    \end{inline-diagram}.\]
For any morphisms $f: A\mrto B$ and $g:B \rfib D$ the squares
 \[\begin{inline-diagram} 
    {A & B \\ A & B \\ };
    \eq{1-1}{2-1} 
    \eq{1-2}{2-2} 
    \cofib{1-1}{1-2}^{f} \cofib{2-1}{2-2}_{f}
    \end{inline-diagram}\  \mathrm{and} \ 
    \begin{inline-diagram}
    {A & A  \\ B & B \\};
    \eq{1-1}{1-2} 
    \eq{2-1}{2-2} 
    \fib{1-1}{2-1}_{g} \fib{1-2}{2-2}^{g} 
    \end{inline-diagram}\]
  must be distinguished.

  A functor $F:\C \rto \C'$ of simple double categories is a pair of functors $F_\E: \E_\C \rto \E_{\C'}$ and $F_\M: \M_\C \rto \M_{\C'}$ which agree on objects and preserve distinguished squares.  
\end{definition}

\begin{remark}
     The naming of $\E_\C$ and $\M_\C$ is meant to invoke ``epimorphisms'' and ``monomorphisms,'' and the notation reflects that intuition.  It is important to note, however, that in most examples the morphisms in $\E_\C$ are not, in fact, epic.
\end{remark}

\begin{definition} \label{def:catcofsq} 
A  
\textsl{squares category} is a simple double category $\C = (\E_\C,\M_\C)$ with a chosen basepoint $O$ which is initial in both $\M_C$ and $\E_C$.  
A functor of squares categories is a functor of simple double categories which preserves the basepoint.
\end{definition}


\begin{example} \label{ex:wald}
  Let $\mathcal{W}$ be a Waldhausen category. We can construct an associated squares category $\W^\square$ as follows: we set $\M_{\W^\square}$ be the category of cofibrations in $\mathcal{W}$, $\E_{\W^\square}$ to be the category $\mathcal{W}$, and we set the distinguished squares to be the commutative squares 
  \begin{diagram}
    { A & B \\ C & D \\};
    \cofib{1-1}{1-2} \cofib{2-1}{2-2} \to{1-1}{2-1} \to{1-2}{2-2}
  \end{diagram}
  in which the induced map $B \cup_A C \rto D$ is a weak equivalence. The base point $O$ is the zero object.
\end{example}

\begin{example} \label{ex:finset}
  We define the squares category of finite sets $\mathbf{FinSet}$, with $\E_\mathbf{FinSet}$ and $\M_\mathbf{FinSet}$ both the categories of finite sets and injections. Define a
  square
  \begin{diagram}
    { A & B \\ C & D \\};
    \arrowsquare{}{}{}{}
  \end{diagram}
  to be distinguished if $B \cap C = A$ and if $B \cup C = D$. The basepoint is the empty set.  (Here the
  intersection is inside their image in $D$, and the union is also inside $D$.)

  A similar construction works to construct a squares category for $\mathbf{Set}$; the finiteness is only relevant in order for the $K$-theory to be interesting.
\end{example}

 \begin{example}\label{ex:varcgw}   Let $\Var_k$ be the double category of varieties (i.e. reduced separated schemes \edit{of finite type)} over $k$ with $\M_{\Var_k}$ the \edit{\sout{closed}open} immersions and $\E_{\Var_k}$ the \edit{\sout{open}closed immersions}.  A square
 \begin{diagram}
     {A & B \\ C & D \\};
     \arrowsquare{}{}{}{}
 \end{diagram}
 is distinguished if it is a pullback square after forgetting down to the ordinary category of varieties and morphisms of varieties, and if the images of $B$ and $C$ cover $D$.

 \edit{\sout{Let $\Var_k^i$ be the squares subcategory of $\Var_k$ which includes all of the $\M$-morphisms, but only isomorphisms in the $\E$-direction.}}
\end{example}

\begin{remark}
   Any CGW-category $\C$ as defined in \cite{CZ-cgw} is also a squares category, as the axioms of a CGW-category assume strictly more information about compatibility than a squares category does. \autoref{ex:finset} and \autoref{ex:varcgw} are examples of CGW categories  constructed in  \cite{CZ-cgw}.
\end{remark}

We now introduce a new example of a squares category of varieties, whose $K$-theory will be studied in \autoref{sect:bittner}. First, we recall the concept of piecewise isomorphism of varieties which is used in the definition of distinguished squares. 

\begin{definition}\label{def:varpiecewise}
    A morphism of varieties $f:X \to Y$ is called a \textit{piecewise isomorphism}\footnote{Some references, including \cite{liusebag}, use a more general isomorphism which does \emph{not} assume the existence of a morphism $f: X \rto Y$, but using stratifications on $X$ and $Y$ separately and requiring isomorphisms on strata. This is not equivalent to the definition we are using.} if there exists a stratifications of closed immersions
\begin{diagram}
    { 
    Y_0 & Y_1 & \dots & Y_n=Y\\};
    \cofib{1-1}{1-2} \cofib{1-2}{1-3} \cofib{1-3}{1-4}
       
\end{diagram}
    such that
    \begin{enumerate}
        \item[(1)] the induced map $f^{-1}(Y_0) \to Y_0$ is an isomorphism,
        \item[(2)] for $i=1,\dots,n$, the map $f^{-1}(Y_i\smallsetminus Y_{i-1}) \to Y_i \smallsetminus Y_{i-1}$ is an isomorphism.
    \end{enumerate} 
\end{definition}

\begin{example}\label{ex:varkw}
    Let $\Var_{k,w}$ be the squares category with objects varieties over $k$, $\M_{\Var_{k,w}}$ the \edit{open\sout{closed}} immersions and $\E_{\Var_{k,w}}$ all morphisms of varieties.  The distinguished squares
 \begin{diagram}
     {A & B \\ C & D \\};
     \arrowsquare{\circ}{}{}{\circ}
 \end{diagram}are the squares that are pullbacks in the category of varieties over $k$, such that the induced morphism $B \smallsetminus A \rto D\smallsetminus C$ is a piecewise-isomorphism. We need to check that squares compose both vertically and horizontally.  The composition of pullback squares, either horizontally or vertically, is also a pullback square, so we focus on the piecewise-isomorphism condition.  Piecewise-isomorphisms also compose, so squares automatically compose vertically.  To check horizontal composition it suffices to verify that piecewise-isomorphisms satisfy \emph{extension} relative to subtraction sequences, in the sense that given a diagram
    \begin{diagram}
        {A & B & C \\ D & E & F\\};
        \to{1-1}{1-2}^\circ \to{1-3}{1-2}
        \to{2-1}{2-2}^\circ \to{2-3}{2-2}
        \to{1-1}{2-1}_f \to{1-2}{2-2}^g \to{1-3}{2-3}^h
    \end{diagram}
    where $f$ and $h$ are piecewise-isomorphisms, so is $g$.  (This holds by concatenating the stratifications on $D$ and $F$ appropriately.) Thus $\Var_{k,w}$ is well-defined.
\end{example}

\begin{example} \label{ex:polytope}
  Define a \emph{simplex} in $\R^m$ to be a convex hull of $n \leq m$ points.
  A \emph{polytope} is a finite union of simplices.  Let $\E_\C$ and $\M_\C$ both be the category
  whose objects are polytopes in $\R^m$ and whose morphisms are generated by
  isometric inclusions of polytopes. Squares are 
  distinguished if they are both pushouts and pullbacks after forgetting the double category structure. The basepoint is the empty polytope.
\end{example}

\begin{example}\label{manifoldex}
Let $\Mnfldbd_n$ be the category of smooth compact $n$-dimensional manifolds with boundary and smooth maps. We define both the subcategories $\E_\C$ and $\M_\C$ to be all manifolds but with morphisms given by the smooth embeddings of manifolds with boundary $f\colon N \rightarrow M$ such that $\partial N$ is mapped to a submanifold with trivial normal bundle, and such that each connected component of the boundary $\partial N$ is either mapped entirely onto a boundary component or entirely into the interior of $M$.
We define distinguished squares to be those commutative squares in $\Mnfldbd_n$     	
$$\begin{tikzcd}
    	N	\arrow[r ]\arrow[d ] &M\arrow[d]\\
    	M'\arrow[r] & M \cup_{N} M'.
    	\end{tikzcd}$$ that are pushout squares, i.e. such that $ M\cup_{N} M'$ is a smooth manifold. The chosen basepoint object is the empty manifold. This squares category is considered in \cite{WITMona}.

\end{example}

\begin{example}
    Let $\C$ be a squares category.  Then $\C^t$, the \emph{transpose} of $\C$ has $\M_{\C^t} = \E_\C$, and $\E_{\C^t} = \M_\C$.  A square is distinguished if it is distinguished in $\C$.
\end{example}

In \autoref{sect:bittner} we will see more examples of squares categories of varieties whose $K$-theories we will consider. 

\begin{definition}\label{defn:symm_mon}
    A squares category $\C$ is  \textit{symmetric monoidal} if both $\E_\C$ and $\M_\C$ have the structure of a symmetric monoidal category with unit $O$, which coincides on objects; in other words, such that $A \otimes_{\E_\C}B = A \otimes_{\M_\C} B$, and the product of two distinguished squares is a distinguished square. A symmetric monoidal functor of squares categories $F: \mathcal{C} \to \mathcal{D}$ is a functor of squares categories such that $F$ is a symmetric monoidal functor when restricted to both $\E_{\C} \to \E_{\D}$ and $\M_{\C} \to \M_{\D}$. 
\end{definition}

\begin{rmk}\label{rmk:restrictecoprod}
There is an important subtlety here involving the symmetric monoidal structure and coproducts.  In all of the examples that follow, it appears as if the symmetric monoidal structure is simply a coproduct.  However, this is not the case, as in most of these examples coproducts do not exist.  Consider, for instance, the category of finite sets and injections. This does not have coproducts, because the map $A \amalg A \rto A$ is not injective.  However, it does have an object with underlying set $A \amalg A$ and canonical injections $i_0,i_1: A \rcofib A\amalg A$.  The existence of this object can be formalized using the notion of \emph{restricted pushout}.
    Given a diagram $\mathcal{J}$,
    \[B \lto A \rto C,\]
    let $\mathbf{Cocone}_{\mathcal{J}}$ be the category of cocones $B \rto D \lto C$ under this diagram satisfying the property that the square
    \begin{diagram}
        { A & C \\ B & D \\};
        \arrowsquare{}{}{}{}
    \end{diagram}
    commutes and is a pullback square.  The \emph{restricted pushout} of the above diagram is then the initial object of $\mathbf{Cocone}_{\mathcal{J}}$.   
    
The category of finite sets and injections does have restricted pushouts. Considering the discussion above, the square
\begin{diagram}
    { \emptyset & A \\ A & A \\};
    \arrowsquare{}{}{}{}
\end{diagram}
is not a pullback square, and therefore does not appear in $\mathbf{Cocone}$.  Thus the non-injectivity of the fold map is not an obstruction to the restricted pushout existing. 
\end{rmk}

In all of the examples above, the symmetric monoidal structure is given either by the coproducts in $\E_\C$ and $\M_\C$, or by restricted pushouts, as explained in \autoref{rmk:restrictecoprod}. We prefer to separate out squares categories and symmetric monoidal squares categories, even though in this paper we only consider symmetric monoidal squares categories. However, there are examples of squares categories \textit{without} symmetric monoidal structure that could provide interesting insights in future work. 

\begin{remark}
    \edit{We omit the example of general SW categories of \cite{campbell}, as constructing a squares category from an SW category in a similar way to \autoref{ex:wald} does not work in a straightforward manner without an offensively long list of extra assumptions.  The problem with this construction is that SW categories lack a universal property for the complement, and also that there are not enough assumptions about how weak equivalences interact with pullbacks and with complements.}
\end{remark}

\section{\texorpdfstring{$K$}{K}-theory of squares categories}

We are now ready to define the $K$-theory of a squares category.

\begin{definition}
    Let $\C$ be a squares category and $\D$ an ordinary category.  A \textsl{horizontal functor} $\D \to \C$ is a functor $F:\D \to \M_\C$.  A \textsl{vertical distinguished transformation} between horizontal functors $F,G: \D \to \C$, denoted $\alpha: F \Rto G$, is a choice of a vertical morphism $\alpha_A: F(A) \rfib G(A)$ for all $A\in \D$ such that for any horizontal morphism $f: A \rcofib B$ in $\D$, the square
    \begin{diagram}
        { F(A) & F(B) \\ G(A) & G(B) \\};
        \cofib{1-1}{1-2}^{F(f)} \cofib{2-1}{2-2}^{G(f)}
        \fib{1-1}{2-1}_{\alpha_A} \fib{1-2}{2-2}^{\alpha_B}
    \end{diagram}
    is distinguished.

    The \textsl{category of horizontal functors}, denoted $\hFun(\D,\C)$ has objects the horizontal functors and morphisms the vertical distinguished transformations.  Analogous definitions of vertical functor, horizontal distinguished transformation, and category of vertical functors can be defined.
\end{definition}

\begin{definition} \label{def:Ksq}
  As usual, write $[k]$ for the category $0 \rto 1 \rto \cdots \rto k$. Let $\C$ be a squares category.  Let \[T_n\C = \hFun([k],\C).\]  
  These categories assemble into a
  simplicial category which we denote $T_\dotp \C$. On objects, for $0<i<1$, the $i$th face map is given by composing in the $i$th slot, and on morphisms it is given by composing the squares. The first and last face maps are given by deleting. The degeneracies are given by inserting identity maps on objects and repeating vertical maps on morphisms. 
 We define the squares $K$-theory of $\C$ by looping the geometric realization of the bisimplicial set given by the nerve of this simplicial category.
  \[K^\square(\C) = \Omega_O |N_\dotp T_\dotp\C|.\]
  Here, $\Omega_O$ is the based loop space based at the object $O \in N_0
  T_0\C$.  
\end{definition}

\begin{remark}
    Note that the nerve of the simplicial category $T_\dotp\C$ is a bisimplicial set, which by definition is also the nerve  of the double category $\C$, which we denote $N^\square_{\dotp,\dotp} \C$. We can also explicitly interpret the $(k,\ell)$-simplices as follows. Consider the category $[k]\times [\ell]$, which has a unique morphism from $(i,j)$ to $(i',j')$ exactly when $i\leq i'$ and $j\leq j'$. View this as a squares category $[k]\boxtimes [\ell]$ with horizontal morphisms given by those for which $i=i'$, vertical morphisms given by those for which $j=j'$, and with distinguished squares given by the commuting squares with these vertical and horizontal maps. Then the $(k,\ell)$-simplices $N^\square_{k,\ell} \C$ are the functors of squares categories $[k]\boxtimes [\ell] \rto \C$, and the bisimplicial structure is given by precomposition in the corresponding coordinate.
\end{remark}



The lemma below follows essentially by definition. 

\begin{lemma}\label{lem:space_functoriality}
A functor of squares categories $\mathcal{C} \to \mathcal{D}$ induces a map on $K$-theory spaces $$K^\square (\mathcal{C}) \to K^\square (\mathcal{D}).$$ 
\end{lemma}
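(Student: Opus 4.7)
The plan is to unpack the definition of $K^\square$ and verify that every layer in the construction is functorial in the input squares category.

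First, I would check that for each fixed $n$, post-composition with $F$ yields a functor $T_n F : T_n \C \to T_n \D$. On objects: a horizontal functor $G : [n] \to \C$ is a functor $[n] \to \M_\C$, and since $F$ restricts to a functor $F_\M : \M_\C \to \M_\D$, the composite $F_\M \circ G$ is a horizontal functor $[n] \to \D$. On morphisms: a vertical distinguished transformation $\alpha : G \Rightarrow G'$ consists of vertical morphisms $\alpha_i \in \M_\C$ for $i \in [n]$ such that each resulting square is distinguished; applying $F$ to each $\alpha_i$ (using $F_\E$) gives morphisms in $\E_\D$, and the resulting squares are distinguished because a functor of simple double categories preserves distinguished squares by definition. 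Composition of vertical distinguished transformations is preserved because $F_\E$ is a functor.

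Next, I would verify that the collection $\{T_n F\}_{n\geq 0}$ assembles into a map of simplicial categories $T_\dotp F : T_\dotp \C \to T_\dotp \D$. The face maps in $T_\dotp \C$ are given either by composition in $\M_\C$ or by deletion, and the degeneracies by inserting identities; since $F_\M$ is a functor (preserving composition and identities) and $F_\E$ sends identities to identities, compatibility with all face and degeneracy maps is immediate. Taking the nerve in each simplicial direction then produces a map of bisimplicial sets $N_\dotp T_\dotp \C \to N_\dotp T_\dotp \D$, and geometric realization yields a continuous map $|N_\dotp T_\dotp \C| \to |N_\dotp T_\dotp \D|$.

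Finally, because $F$ is a functor of squares categories (not merely of simple double categories), it sends the basepoint $O_\C$ to the basepoint $O_\D$. Hence the induced map on realizations is basepoint-preserving, and applying $\Omega_{O}$ gives the desired map $K^\square(F) : K^\square(\C) \to K^\square(\D)$. Functoriality in $F$ (identities and composition) follows at each stage from the corresponding functoriality of $F_\E$ and $F_\M$. There is no real obstacle here; the only point that uses more than the ambient category structure is the preservation of distinguished squares, which is built into the definition of a functor of simple double categories and is precisely what ensures that $T_n F$ is defined on morphisms.
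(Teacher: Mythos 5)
Your argument is correct and is exactly the routine verification the paper has in mind when it states that the lemma ``follows essentially by definition.'' (One small slip: the components $\alpha_i$ of a vertical distinguished transformation are morphisms in $\E_\C$, not $\M_\C$, but you correctly apply $F_\E$ to them, so nothing is affected.)
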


Next we show that these constructions lift to spectra. Let $\sO$ be the categorical $E_\infty$ operad, also known as the Barratt-Eccles or the permutativity operad.\footnote{Sometimes this operad is also referred to as the ``chaotic categorification" of the associativity operad, which is the set valued operad with level $n$ given by the symmetric group $\Sigma_n$.} Each $O(n)$ is defined as the category with objects the elements of the symmetric group $\Sigma_n$ and with a unique morphism between any two objects. Algebras over $\sO$ in $\Cat$ are precisely the permutative categories (strictly unital and stricty associative symmetric monoidal categories) \cite{Maypermutative}. Every symmetric monoidal category is naturally monoidally equivalent to a permutative category \cite{isbell, GMMO}, and we will implicitly strictify symmetric monoidal categories to permutative ones. We consider strict maps of symmetric monoidal categories.\footnote{We could more generally consider strong monoidal functors which would induce multifunctors on the associated multicategories. Then the left adjoint of the forgetful functor from permutative categories to multicategories turns multifunctors into strict functors of permutative categories \cite{Tonymulti}.}

Note that $|\sO|$ is an $E_\infty$ operad in spaces, since $|\sO|\simeq E\Sigma_n$. Therefore, since the classifying space preserves products, the classifying space of any $\sO$-algebra $\C$ in $\Cat$ is an $E_\infty$ space, namely its group completion is an infinite loop space whose deloopings are given by the operadic infinite loop space machine \cite{Mayloop} (or equivalently by the Segal $\Gamma$-space machine \cite{MayThomason}). Now note that for a simplicial $\sO$-algebra $\C_{\sbt}$, the classifying space $|N_{\sbt}\C_{\sbt}|$ is then  an $|\sO|$-algebra in spaces, so it is an $E_\infty$ space. Therefore we get the following proposition.

\begin{prop}\label{Einfty}
If $\C_{\sbt}$ is a simplicial symmetric monoidal category, the geometric realization  $|N_{\sbt}\C_{\sbt}|$ is an $E_\infty$ space. Moreover, a map of simplicial symmetric monoidal categories induces a map of $E_\infty$ spaces.
\end{prop}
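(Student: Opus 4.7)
\emph{Proof plan.} The plan is to reduce the claim to the two facts already assembled in the excerpt: that $|\sO|$ is an $E_\infty$ operad in spaces which acts on the classifying space of any $\sO$-algebra in $\Cat$, and that the relevant functors $N$ and $|-|$ are product-preserving.

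First, using the strictification results cited in the paragraph preceding the proposition (Isbell, GMMO), I would replace $\C_\sbt$ levelwise with a permutative category. Since the strictification is functorial for strict symmetric monoidal functors --- which is exactly the class of maps we are working with, as the excerpt notes --- the simplicial structure maps carry over to strict maps of permutative categories. Thus we may assume without loss of generality that $\C_\sbt$ is a simplicial object in permutative categories, equivalently an $\sO$-algebra in the category $\mathbf{s}\Cat$ of simplicial small categories.

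Next I would push this algebra structure through the nerve. Because $N \colon \Cat \to \mathbf{sSet}$ preserves finite products, it sends $\sO$-algebras in $\Cat$ to $N\sO$-algebras in simplicial sets, and applied levelwise in the outer simplicial direction it makes $N_\sbt \C_\sbt$ into an $N\sO$-algebra in bisimplicial sets. Finally, I would apply geometric realization. Realization of bisimplicial sets preserves finite products (since $|\Delta^p \times \Delta^q| \cong |\Delta^p| \times |\Delta^q|$ and realizations are CW, so the product in $\mathrm{Top}$ is the correct one to use), hence carries $N\sO$-algebras to $|N\sO|$-algebras. Since $|N\sO| = |\sO|$ is an $E_\infty$ operad by the remark preceding the proposition, this exhibits $|N_\sbt \C_\sbt|$ as an algebra over an $E_\infty$ operad in spaces, i.e.\ as an $E_\infty$ space.

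For the moreover part, a strict map $\C_\sbt \to \C'_\sbt$ of simplicial symmetric monoidal categories strictifies (functorially) to a map of simplicial $\sO$-algebras in $\Cat$; pushing through the two product-preserving functors $N$ and $|-|$ yields a map of $|\sO|$-algebras, i.e.\ a map of $E_\infty$ spaces. The main point to be careful about is that strictification and all of the functors used respect the simplicial direction strictly; this is where the hypothesis that we consider strict maps of symmetric monoidal categories is essential, and is really the only non-automatic step in the argument.
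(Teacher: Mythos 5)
Your proposal is correct and takes essentially the same route as the paper: strictify to a simplicial $\sO$-algebra in $\Cat$, then transport the operad action through the product-preserving functors $N$ and $|-|$ to exhibit $|N_{\sbt}\C_{\sbt}|$ as an algebra over the $E_\infty$ operad $|\sO|$, with the moreover part following by functoriality for strict maps. The paper's argument (given in the paragraph preceding the proposition) is exactly this, with the strictification handled implicitly by the earlier convention.
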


 This allows us to lift the squares $K$-theory construction to the spectrum level.  


\begin{thm}\label{infiniteloop}
    The $K$-theory space $K^\square(\C)$ for a symmetric monoidal squares category $\C$ is an infinite loop space. Moreover, a symmetric monoidal functor of squares categories  $\mathcal{C} \to \mathcal{D}$ induces a map of infinite loop spaces $K^\square(\mathcal{C}) \to K^\square (\mathcal{D})$. 
\end{thm}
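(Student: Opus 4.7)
The plan is to exhibit $T_\bullet \C$ as a simplicial symmetric monoidal category, apply Proposition \ref{Einfty}, and then observe that looping an $E_\infty$ space always produces an infinite loop space.

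First I would endow each category $T_n\C = \hFun([n],\C)$ with a symmetric monoidal structure induced objectwise from $\C$. Given horizontal functors $F,G : [n] \to \M_\C$, define $(F \otimes G)(i) = F(i) \otimes G(i)$ and $(F\otimes G)(i \le j) = F(i\le j)\otimes G(i \le j)$; this lands in $\M_\C$ because $\otimes$ is a monoidal structure on $\M_\C$. Given vertical distinguished transformations $\alpha : F \Rto F'$ and $\beta : G \Rto G'$, the componentwise tensor $\alpha \otimes \beta$ is a vertical distinguished transformation because the product of two distinguished squares is distinguished by Definition \ref{defn:symm_mon}. Associativity, unitality (with unit the constant functor at $O$), and the symmetry isomorphism are all inherited objectwise. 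After replacing $\C$ by its strictification as a permutative category, $T_n\C$ is a permutative category, i.e.\ an $\sO$-algebra in $\Cat$.

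Next I would check that every simplicial structure map of $T_\bullet\C$ is a strict map of permutative categories. The inner face maps are horizontal composition of adjacent maps in $[n]$, the outer face maps delete the first or last object, and the degeneracies insert identities; each of these operations commutes strictly with the pointwise tensor product $\otimes$ on objects and with the square tensor product on morphisms. This step is mostly bookkeeping, with the only non-formal input being again that products of distinguished squares are distinguished, which ensures that the horizontal compositions of $\alpha \otimes \beta$ agree with $(\text{horizontal composition of }\alpha) \otimes (\text{horizontal composition of }\beta)$. Thus $T_\bullet\C$ is a simplicial symmetric monoidal category.

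By Proposition \ref{Einfty}, the space $|N_\bullet T_\bullet \C|$ is then an $E_\infty$ space, with the $E_\infty$ structure acting strictly through the unit $O$. The based loop space $K^\square(\C) = \Omega_O |N_\bullet T_\bullet \C|$ inherits this $E_\infty$ structure (the two $H$-space multiplications, loop concatenation and the operadic one, commute by Eckmann–Hilton), and it is automatically group-like because loop spaces are group-like. A group-like $E_\infty$ space is an infinite loop space by the operadic recognition principle \cite{Mayloop}, which gives the first claim. For the second claim, a symmetric monoidal functor $F : \C \to \D$ induces an objectwise functor $T_\bullet F : T_\bullet \C \to T_\bullet \D$ that is levelwise a strict map of permutative categories and commutes with the simplicial structure, i.e.\ a map of simplicial symmetric monoidal categories; by the second part of Proposition \ref{Einfty} this yields an $E_\infty$ map on realizations, and looping yields the desired map of infinite loop spaces.

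The main obstacle is the first step: one has to verify that distinguished squares are stable under the monoidal product in a way compatible with the horizontal and vertical compositions used by the face maps of $T_\bullet\C$. Once this compatibility is in hand the remainder is formal, being a direct application of the machinery packaged in Proposition \ref{Einfty} together with the standard fact that loop spaces of $E_\infty$ spaces are infinite loop spaces.
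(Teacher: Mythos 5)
Your proof is correct and follows essentially the same route as the paper: put the pointwise symmetric monoidal structure on each $T_n\C$ (using that products of distinguished squares are distinguished), check that the face and degeneracy maps are (strict) symmetric monoidal so that $T_\dotp\C$ is a simplicial $\sO$-algebra, and apply \autoref{Einfty}. The only cosmetic difference is at the end: the paper deduces group-likeness of $|N_\dotp T_\dotp\C|$ from its connectedness and then loops, whereas you loop first and observe that loop spaces are group-like; both are fine.
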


\begin{proof}
    A symmetric monoidal structure on the squares category $\C$ induces a symmetric monoidal structure on each $T_n\C$ as follows. On the objects $\ob T_n\C= \Fun([k], \M_{\C})$ it is given by the pointwise symmetric monoidal structure in $\M_{\C}$. On morphisms, which are vertical distinguished transformations it is induced by the symmetric monoidal structure in $\E_{\C}$ and the fact that the product of two distinguished squares is again distinguished. Note that all face and degeneracy maps are symmetric monoidal. Therefore, $T_{\sbt}\C$ is a simplicial $\sO$-algebra  in $\Cat$. Thus by \autoref{Einfty} the space $|N_{\sbt}T_{\sbt} \C|$ is an $E_\infty$ space. \edit{Moreover, since $\E_\C$ and $\M_\C$ have an initial object (in fact it suffices that either of them has one), $|N_{\sbt}T_{\sbt}\C|$ is connected. Therefore it is grouplike, so it is an infinite loop space.}

From \autoref{defn:symm_mon}, a symmetric monoidal functor of squares categories $F: \mathcal{C} \to \mathcal{D}$ is a functor of squares categories such that $F$ is a symmetric monoidal functor when restricted to both $\E_{\C} \to \E_{\D}$ and $\M_{\C} \to \M_{\D}$. In particular, this induces symmetric  monoidal functors $T_n\C\to T_n\D$ for each $n$, which commute with the face and degeneracy maps. Thus again by \autoref{Einfty} we get a map of $E_\infty$ spaces.
\end{proof}

As we saw in the previous proof, we do not need the full strength of a symmetric monoidal functor on the level of squares categories in order to get the desired map of $E_\infty$ spaces, but it is enough to get maps of symmetric monoidal categories at the level of the $T_{\sbt}$ construction. We record the following criterion for getting an equivalence of infinite loop squares $K$-theory spaces which will be useful later. 

\begin{cor}\label{Einftycriterion}
    Let $\C$ and $\D$ be squares categories.  A simplicial functor of symmetric monoidal categories $T_{\sbt} \C \rto T_{\sbt}\D$ which is an equivalence $T_n \C \to T_n\D $ for all $n$, induces an equivalence of infinite loop spaces $K^\square(\C) \rto K^\square(\D)$. 
\end{cor}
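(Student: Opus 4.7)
The plan is to chase the level equivalence through each layer of the construction of $K^\square$, checking at each step that the construction preserves (appropriate) equivalences, and then to invoke \autoref{Einfty} once more to promote the resulting weak equivalence of spaces to one of infinite loop spaces.

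First, I would apply the nerve levelwise. Since the given simplicial functor $T_\dotp \C \to T_\dotp \D$ is an equivalence of categories at each simplicial level, and since the nerve functor sends equivalences of categories to weak equivalences of simplicial sets, we obtain a map of bisimplicial sets $N_\dotp T_\dotp \C \to N_\dotp T_\dotp \D$ that is a levelwise weak equivalence in the first coordinate. Next, because geometric realization of bisimplicial sets preserves levelwise weak equivalences (realize first in one direction, then invoke the Bousfield--Friedlander / Reedy fact that realization preserves levelwise equivalences of simplicial spaces), the induced map $|N_\dotp T_\dotp \C| \to |N_\dotp T_\dotp \D|$ is a weak equivalence of spaces. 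Both sides are path-connected (as already observed in the proof of \autoref{infiniteloop}, since the object $O$ serves as a basepoint that is connected to every other vertex through the degeneracies and the structure of $T_0\C$), so taking based loops at $O$ yields a weak equivalence
\[K^\square(\C) = \Omega_O |N_\dotp T_\dotp\C| \;\xrightarrow{\;\simeq\;}\; \Omega_O |N_\dotp T_\dotp\D| = K^\square(\D).\]

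To upgrade this weak equivalence to an equivalence of infinite loop spaces, I would reuse the argument in the proof of \autoref{infiniteloop}: since $T_\dotp \C \to T_\dotp \D$ is a simplicial functor of symmetric monoidal categories, it is levelwise a map of $\sO$-algebras in $\Cat$, and the face and degeneracy maps are symmetric monoidal on both sides, so applying the nerve and realizing produces a map of $|\sO|$-algebras, i.e.\ a map of $E_\infty$ spaces. By \autoref{Einfty}, the resulting $E_\infty$ map $|N_\dotp T_\dotp\C| \to |N_\dotp T_\dotp \D|$ is a map of connected $E_\infty$ spaces, hence of infinite loop spaces after group completion (which is trivial here since the spaces are already grouplike). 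A weak equivalence between infinite loop spaces that is also a map of infinite loop spaces is automatically an equivalence of infinite loop spaces, proving the claim.

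There is essentially no hard step here; the statement is a formal consequence of the two facts that (i) nerves and realizations preserve the relevant equivalences, and (ii) the symmetric monoidal bookkeeping of \autoref{infiniteloop} applies verbatim to any simplicial symmetric monoidal map, not just to a functor arising from a squares category morphism. The only mild subtlety to be careful about is confirming that \emph{level equivalence} in the statement is meant in the $1$-categorical sense (equivalence of categories at each level), so that nerves detect it as a weak equivalence of simplicial sets; if instead one reads it as ``levelwise weak equivalence of nerves'' then this first step is vacuous and the argument is even shorter.
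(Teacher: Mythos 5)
Your argument is correct and is essentially the proof the paper intends: the corollary is recorded without a written proof precisely because it follows from \autoref{Einfty} together with the observation (made just before the corollary) that only the simplicial symmetric monoidal structure on $T_\dotp\C$ is needed, plus the standard facts that nerves and geometric realization of bisimplicial sets preserve levelwise equivalences. Your closing remark about the two readings of ``level equivalence'' is apt — in the paper's main application (\autoref{prop:KComp}) the levelwise equivalences are produced via Quillen's Theorem A, i.e.\ as weak equivalences of nerves rather than equivalences of categories, and your argument covers that case as well.
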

In particular, a symmetric monoidal functor of squares categories for which the maps $T_n \C \rto T_n \D$ are equivalences gives  an equivalence of squares $K$-theory spectra. 

Since the operadic infinite loop space machine functorially produces the deloopings of the group completion of an $E_\infty$-algebra, from now on we sometimes abuse notation  and use $K^\square(\C)$ to refer to the associated $\Omega$-spectrum instead of just the underlying infinite loop space.

\begin{example}
    Consider \autoref{ex:finset}.  In this case, $K^\square(\mathbf{FinSet})$ is exactly the same as the $K$-theory of $\mathbf{FinSet}$ considered as a CGW-category in \cite[Example 3.2]{CZ-cgw}.  As shown there, this is equivalent to the usual $K$-theory of finite sets, which gives the sphere spectrum.
\end{example}

\begin{example}
    Consider \autoref{ex:polytope}.  In this case, $K_0^\square$ is McMullen's polytope algebra.  Conjecturally, it should also be the same as the scissors congruence $K$-theory of polyhedra $K(\mathfrak{G})$ defined using assemblers in \cite[Section 5.2]{Z-Kth-ass}, although a direct comparison is currently unknown.  The comparison in \autoref{sect:bittner} for varieties uses strongly that the inclusion of a variety into its compactification is an open immersion, whose analogue does not hold for polytopes. 
\end{example}

\begin{example}
    Consider \autoref{manifoldex}. In this case, $K^\square(\Mnfldbd_n)$ is the $K$-theory of manifolds. \autoref{infiniteloop} shows that the squares $K$-theory space constructed in \cite{WITMona} is indeed a spectrum. Its $K^\square_0$ is the scissors congruence group $SK^\partial_n$ for $n$-dimensional manifolds with boundary.
\end{example}

\begin{example}
    Let $\C$ be a squares category, and let $\C^t$ be the transpose, where the roles of $\M$-morphisms and $\E$-morphisms are switched.  Then $K^\square(\C) \cong K^\square(\C^t)$, as the diagonals of the bisimplicial sets $N^\square_{\dotp,\dotp} \C$ and $N^\square_{\dotp,\dotp}\C^t$ are naturally isomorphic.
\end{example}

\begin{proposition}\label{lem:wald}
  Let $\C$ be a Waldhausen category, and let $\C^\square$ be the associated squares category defined in \autoref{ex:wald}. Then
  \[K^W(\C) \simeq K^\square(\C^\square).\]
  Here $K^W$ is the usual Waldhausen $K$-theory of $\C$. 
\end{proposition}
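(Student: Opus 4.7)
The plan is to establish a zigzag of weak equivalences between the bisimplicial sets $N_\bullet T_\bullet \C'$ and $N_\bullet wS_\bullet \C$, whose realizations compute the spaces underlying $K^\square(\C')$ and $K^W(\C)$ respectively. The key technical input from the Waldhausen structure is the gluing lemma for weak equivalences, which provides a dictionary between vertical distinguished transformations in $T_\bullet\C'$ and the weak equivalences in $\C$ that appear in Waldhausen's $wS_\bullet$-construction.

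Specifically, for $F,G\colon [n] \to \M_{\C'}$ and a natural transformation $\alpha\colon F\Rightarrow G$ whose component $\alpha_0$ is a weak equivalence, I claim the following are equivalent: (i) $\alpha$ is a vertical distinguished transformation in the sense of \autoref{ex:wald}; (ii) every component $\alpha_i$ is a weak equivalence. The forward direction is an inductive application of 2-out-of-3: if $\alpha_i$ is a weak equivalence, then the canonical map $F(i{+}1) \to G(i) \cup_{F(i)} F(i{+}1)$ is a weak equivalence by the gluing axiom, and composing with the distinguished-square map $G(i) \cup_{F(i)} F(i{+}1) \to G(i{+}1)$ shows that $\alpha_{i+1}$ is a weak equivalence. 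The reverse direction is a direct application of gluing.

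Using this dictionary, I would introduce the full subcategory $T_n^O \C' \subseteq T_n \C'$ of sequences with $A_0 = O$. Since $O$ is initial in $\E_{\C'} = \C$, the component $\alpha_0$ of any morphism between such sequences is forced to equal $\mathrm{id}_O$, so by the equivalence above every morphism in $T_n^O \C'$ is a levelwise weak equivalence of cofibration sequences. Forgetting Waldhausen's subquotient data then defines a natural functor $wS_n \C \to T_n^O \C'$ that is an equivalence of categories for each $n$, because subquotients in $\C$ are determined up to unique isomorphism by the underlying cofibration structure. With $T_\bullet^O \C'$ given the simplicial structure where $d_0$ is defined by quotienting by $A_1$ (coherent because $A_0 = O$ plays the role of the Waldhausen basepoint), this assembles into a levelwise equivalence of simplicial categories $wS_\bullet \C \xrightarrow{\simeq} T_\bullet^O \C'$, hence an equivalence of bisimplicial realizations.

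The main obstacle is to show that the inclusion $T_\bullet^O \C' \hookrightarrow T_\bullet \C'$ induces a weak equivalence on the realizations $|N_\bullet(-)|$, despite not being a sub-simplicial inclusion under the natural $T_\bullet$-simplicial structure (where $d_0$ deletes $A_0$ rather than quotienting by $A_1$). I anticipate handling this via a Theorem A-style argument applied to the projection $T_n\C' \to T_0\C' = \C$ sending a sequence to its first object, using that $\C$ has the initial object $O$ to contract the relevant comma categories. Combined with \autoref{Einftycriterion}, this would promote the equivalence of bisimplicial realizations to an equivalence of infinite loop spaces, yielding $K^W(\C) \simeq K^\square(\C')$.
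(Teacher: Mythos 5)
Your first two steps are sound. The gluing-lemma dictionary (a distinguished transformation whose component $\alpha_0$ is a weak equivalence is the same thing as a levelwise weak equivalence) is correct in both directions --- and note that your ``forward direction'' only uses closure of weak equivalences under composition, not 2-out-of-3, which matters because Waldhausen categories need not be saturated. The resulting identification of $wS_n\C$ with the full subcategory $T_n^O\C'$ of sequences beginning at $O$ is also right, and is essentially how the paper's proof (via the end of Section 1.3 of Waldhausen, with the intermediate category $wT^+_\dotp\C$ of sequences with chosen quotients) proceeds.

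The gap is in your last step, and the proposed fix does not close it. The projection $T_n\C' \to T_0\C'$ onto the first object is not a simplicial functor, since $d_0$ in $T_\dotp\C'$ deletes the first object; so there is no functor of (bi)simplicial objects to which a Theorem~A argument can be applied. Even a successful levelwise argument that each $T_n^O\C' \hookrightarrow T_n\C'$ is a homotopy equivalence of categories would leave you with no map of bisimplicial sets to feed into the realization lemma, precisely because the two $d_0$'s (quotienting versus deleting) disagree --- which is the obstacle you yourself identified. The repair, which is the actual content of the cited Waldhausen argument, runs the comparison in the other direction: after rigidifying $T_n\C'$ by functorial choices of quotients, the assignment $A_\dotp \mapsto A_\dotp/A_0$ defines a \emph{simplicial} functor $q_\dotp \colon T_\dotp\C' \to wS_\dotp\C$ (one checks $d_0 q = q d_0$ using $(A_i/A_0)/(A_1/A_0) \cong A_i/A_1$), the inclusion $\iota_n$ of based sequences is a section of $q_n$ in each degree, and the quotient maps $A_i \rfib A_i/A_0$ assemble into a natural transformation $\mathrm{id}_{T_n\C'} \Rightarrow \iota_n q_n$; this is a vertical distinguished transformation because each square $A_i \rcofib A_{i+1}$ over $A_i/A_0 \rcofib A_{i+1}/A_0$ is an honest pushout. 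Hence each $q_n$ induces a homotopy equivalence of nerves and the realization lemma finishes the proof. The crucial point is that this homotopy is implemented by morphisms of $T_n\C'$ whose components are \emph{not} weak equivalences; your dictionary, which translates everything into levelwise weak equivalences, cannot see them, yet they are the entire reason the morphisms of $T_n\C'$ are taken to be distinguished squares rather than levelwise weak equivalences.
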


\begin{proof}
  The simplicial category $T_{\sbt} \C^\square$ is exactly the Thomason construction $wT_{\sbt} \C$ in \cite[end of section 1.3]{waldhausen}, and is shown to be homotopy equivalent to the $S_{\sbt}$-construction $wS_{\sbt} \C$. 
  %
\end{proof}

\begin{proposition} \label{prop:oneKvar}
Let $k$ be a field. The natural inclusions of squares categories $\Var_k \rto \Var_{k,w}$ is a weak equivalence after applying $K^\square$.  Moreover,\edit{\sout{$K^\square(\Var_k^i)$,}} $K^\square(\Var_k)$ and $K^\square(\Var_{k,w})$ agree with previously-defined versions of the Grothendieck spectrum of varieties, including the model of assemblers \cite[Section 5.1]{Z-Kth-ass}, \edit{the model of subtractive categories} \cite{campbell}, and \edit{the model of CGW categories} \cite[Example 3.4]{CZ-cgw}.
\end{proposition}

\begin{proof}
    We begin by proving that $K^\square(\Var_{k,w})$ agrees with previously-defined models.
    By \cite[Theorem 9.1]{CZ-cgw} all of the previously-defined versions of the Grothendieck spectrum of varieties agree, so it suffices to focus on one model.  The \edit{category of varieties, which is the underlying category of the squares} category $\Var_{k,w}$,\edit{\sout{also}}has an SW-category structure \edit{with cofibrations given by the class $\M_{\Var_{k,w}}$, i.e., the open immersions, and weak equivalences given by piecewise isomorphisms; see} \cite[Lemma 9.11]{CZ-cgw}. \edit{We denote this SW-category also by $\Var_{k,w}$}.  We can compare \edit{the $T_\dotp$-construction for the squares category $\Var_{k,w}$} with the $\tilde S_\dotp$-construction for the SW category $\Var_{k,w}$ from \cite[Definition 3.32]{campbell}.  The proof here is an adaptation of the proof from \cite[end of Section 1.3]{waldhausen} that the Thomason construction and the Waldhausen constructions agree \edit{for Waldhausen categories, but the argument becomes even more straightforward since there are canonical choices of complements in varieties.}  

    To  show that $|N_\dotp T_\dotp\Var_{k,w}|$ and $|w\tilde S_\dotp \Var_{k,w}|$ are weakly equivalent, we define a \sout{\edit{intermediary}}simplicial category $T^+_{\sbt} \Var_{k,w}$ \edit{which is isomorphic} to $T_{\sbt} \Var_{k,w}$ \edit{and has a map to  $w\tilde S_\dotp \Var_{k,w}$,} as follows.  
    Let $\operatorname{Ar}^\square[n]$ be the squares category with the following structure.
    \begin{description}
        \item[objects] Non-identity morphisms in $[n]$, i.e., relations $(i<j)$, for $0\leq i < j \leq n$, and a basepoint object $e$. (We can think of $e$ as all the identity morphisms identified.) 
        \item[$\M$-morphisms] For every triple $a<b<c$ in $[n]$ there is an $\M$-morphism $(a<b) \to (a<c)$, and the object $e$ is initial in $\M$.
        \item[$\E$-morphisms] For every triple $a<b<c$ in $[n]$ there is an $\E$-morphism $(b<c) \to (a<c)$ and the object $e$ is initial in $\E$.
        \item[distinguished squares] For every quadruple $a < b\leq c < d$ there is a distinguished square
        \begin{diagram}
            { (b<c) & (b<d) \\ (a<c) & (a<d). \\};
            \to{1-1}{1-2} \to{2-1}{2-2} \to{1-1}{2-1} \to{1-2}{2-2}
        \end{diagram}
        when $b<c$, and a similar square with the object $e$ in the upper left corner when $b = c$.
    \end{description}

An object of $T^+_n \Var_{k,w}$ is a squares functor $X:\operatorname{Ar}^\square[n+1] \rto \Var_{k,w}$. We can visualize an object in $T^+_n \Var_{k,w}$ as a diagram
  \begin{diagram}
     {& & & & \emptyset \\
     & & & \emptyset & X_{n<n+1} \\
     & & \dots &\dots & \dots\\
    &  \emptyset & X_{1<2} &\dots & X_{1<n+1} \\
    \emptyset & X_{0<1} & X_{0<2} & \dots & X_{0<n+1}\\ };
  \to{2-4}{2-5}   \to{4-2}{4-3} \to{4-3}{4-4} \to{4-4}{4-5}
    \to{5-1}{5-2} \to{5-2}{5-3} \to{5-3}{5-4} \to{5-4}{5-5} \to{1-5}{2-5}  \to{2-4}{3-4} \to{2-5}{3-5}  \to{3-3}{4-3} \to{3-5}{4-5} \to{4-2}{5-2} \to{4-3}{5-3} \to{4-5}{5-5}
  \end{diagram}
    where \edit{the horizontal morphisms are open immersions, the vertical morphisms are closed immersions, and} all the squares are distinguished squares in $\Var_{k,w}$. We can think of such an object as an object of $T_{n}\Var_{k,w}$ (the bottom row of the diagram, without the first object $\emptyset$) together with \edit{\sout{a} the canonical} choice of complement $X_{i<j} = X_{0<j}\smallsetminus X_{0<i}$ for every \edit{\sout{closed}open} immersion $X_{0<i}\to X_{0<j}$.

A morphism \edit{$\alpha:X \rto X'$ of $T^+_n \Var_{k,w}$ is a collection of morphisms of varieties $\alpha_{ij}:X_{i<j} \to X'_{i<j}$ such that for all $i < j < k$ the left-hand square is a distinguished square in the squares category $\Var_{k,w}$ and the right-hand square commutes in the ordinary category of varieties.}
\begin{diagram}
    {X_{i<j} & X_{i<k} & \qquad & X_{j<k} & X'_{j<k} \\
    X'_{i<j} & X'_{i<k} & & X_{i<k} & X'_{i<k} \\};
    \to{1-1}{1-2} \to{2-1}{2-2}
    \to{1-4}{1-5}^{\alpha_{jk}} \to{2-4}{2-5}^{\alpha_{ik}}
    \to{1-1}{2-1}_{\alpha_{ij}} \to{1-2}{2-2}^{\alpha_{ik}}
    \to{1-4}{2-4} \to{1-5}{2-5}
\end{diagram}
    Since distinguished squares induce piecewise isomorphisms on complements, the maps $\alpha_{ij}:X_{i<j}\to X'_{i<j}$ are piecewise isomorphisms for all $0<i<j\leq n$.  \edit{A morphism of $T_n^+\Var_{k,w}$ can thus be thought of as a morphism in $T_n\Var_{k,w}$, together with the induced morphisms on complements.}

Note that $T^+_{\sbt} \Var_{k,w}$ is a simplicial category, with the simplicial structure induced by precomposition in $\Delta$, where we drop the first face and degeneracy maps (since our level $n$ is shifted up from $\operatorname{Ar}^\square[n]$).

    There is a functor $[n] \rto \operatorname{Ar}^\square[n+1]$ given by taking $i$ to $0 \rightarrow i+1$.  Precomposition with this functor induces a map of simplicial categories $T_n^+\Var_{k,w} \rto T_n \Var_{k,w}$, taking the triangular diagram above to the bottom row \edit{\sout{without the first object}with the first object $\emptyset$ deleted.} This is levelwise an \edit{\sout{equivalence}isomorphism}of categories, and therefore induces an equivalence after applying $|N_\dotp\cdot|$. Thus it remains to show that there exists a simplicial functor $t_\dotp\colon T_\dotp^+\Var_{k,w} \rto w\tilde S_\dotp \Var_{k,w}$ which induces an equivalence after applying $|N_\dotp\cdot|$.  
    
    The simplicial functor $t_\dotp$ takes a functor $X$ in $T_\dotp^+ \Var_{k,w}$ to the functor $t_\dotp(X): \operatorname{Ar}[n] \rto \Var_{k,w}^{\op}$ defined by $t_\dotp(X)(i<j) = X_{i+1<j+1}$. 
    Visually, what happens to an object represented by a triangular diagram as above is that the bottom row is removed, and the rest is shifted down and to the left. By definition and the properties of $F$ this gives a well-defined object of $w\tilde S_\dotp \Var_{k,w}$.
    
    Since the components of morphisms in $T^+_n\Var_{k,w}$ are piecewise isomorphisms in all levels above the bottom row, we have that the image of $t_\dotp$ lands inside $w\tilde S_\dotp \Var_{k,w}$, and is therefore a functor of simplicial categories.  It remains to check that this induces a weak equivalence on geometric realization.

    We claim that $t_n$ has a homotopy inverse $G_n: w \tilde S_n \Var_{k,w} \rto T^+_n \Var_{k,w}$ defined by 
    \[G_n(F)(i<j) = \left\{\begin{array}{ll} F(i-1<j-1) & i > 0 \\
    F(0,j-1) & i = 0 \textup{ and } j\geq 2 \\
    \emptyset & i=0 \textup{ and } j =1
    
    \end{array}\right.\]
    Visually, it takes an object in $w\tilde{S}_n\Var_{k,w}$, shifts it up and to the right, repeats the bottom row and adds an $\emptyset$ on the left.
    
    By definition, $t_nG_n = 1_{w\tilde S \Var_{k,w}}$.

    A natural transformation $\epsilon:G_n\circ t_n \Rto \id_{T^+_n\Var_{k,w}}$ is given by 
    \[\epsilon_{i< j} =  \left\{\begin{array}{ll}
    \id_{X_{i<j}} & i > 1 \\
    X_{1<j} \rto X_{0 < j}  & i = 0
     \end{array}\right.\]
     Thus $t_\dotp$ has a levelwise homotopy inverse, as desired.

    There \edit{\sout{are}is a} natural inclusion of squares categories\edit{\sout{$\Var_k^i \rto$}} $\Var_k \rto \Var_{k,w}$, since all squares in $\Var_k$ are pullbacks by definition and induce isomorphisms on complements.  We can define \edit{a} wide subcategor\edit{y\sout{ies $T^+_\dotp \Var^i_k$ and}} $T^+_\dotp \Var_k$ of $T^+_\dotp \Var_{k,w}$, with the same objects, but morphisms for which the naturality squares between bottom rows are distinguished in \edit{\sout{$\Var_k^i$ and}}$\Var_k$. \edit{\sout{respectively}}\edit{The functor $t_\dotp$ restricts to a functor $T_\dotp^+ \Var_k \rto i\tilde S_\dotp \Var_k$, which is also a homotopy equivalence since each $G_n$ restricts to a functor $w\tilde S_n\Var_{k} \to T^+_n\Var_k$.}  Consider the following diagram of simplicial categories:
    \begin{diagram}
        { 
        T_\dotp\Var_k & T_\dotp^+ \Var_k & i \tilde S_\dotp \Var_k & i\tilde S_\dotp\mathbf{Sch}_{rf} \\
        T_\dotp\Var_{k,w} & T_\dotp^+ \Var_{k,w} & w \tilde S_\dotp \Var_{k,w} & w\tilde S_\dotp\mathbf{Sch}_{rf,w} \\
        };
        \to{1-2}{1-1}_{\edit{\cong}} \to{1-2}{1-3}^{t_\dotp} \to{1-3}{1-4}
        \to{2-2}{2-1}_{\edit{\cong}} \to{2-2}{2-3}^{t_\dotp} \to{2-3}{2-4}
        \to{1-1}{2-1} \to{1-2}{2-2} \to{1-3}{2-3} \to{1-4}{2-4}
    \end{diagram}
    Here, $\mathbf{Sch}_{rf}$ is the SW-category of reduced schemes of finite type, considered as an SW-category, with either isomorphisms (on the top row) or piecewise-isomorphisms (on the bottom row) as the weak equivalences.   On geometric realizations, the rightmost vertical morphism in this diagram is a weak equivalence by \cite[Theorem 9.1]{CZ-cgw}.  The leftmost and central horizontal morphisms are weak equivalences by the argument above.  The rightmost horizontal morphisms are weak equivalences on geometric realization by \cite[Corollary 7.11]{CZ-cgw}.  Thus the leftmost vertical morphism is also a weak equivalence on geometric realization, as desired.
\end{proof}

Motivated by this comparison, we introduce the following notation.

\begin{definition}
    The spectrum $K(\Var_k)$ is the \emph{Grothendieck spectrum of varieties}.  For the purposes of this paper we will model it as \edit{either $K^\square(\Var_k)$ or $K^\square(\Var_{k,w});$} due to the above proposition, it agrees with the constructions in \cite{Z-Kth-ass,campbell,CZ-cgw}.
\end{definition}

Before we end this section we introduce one more squares category of varieties, which will be useful in \autoref{sect:bittner} when we discuss Bittner's presentation \edit{of the Grothendieck ring of varieties in the case when $k$ has characteristic 0}.

\begin{definition}\label{Varkpw}
    Let $\Var_{k,pw}$ be the squares subcategory of $\Var_{k,w}$ containing all objects, all $\M$-morphisms, and only the proper $\E$-morphisms. 
\end{definition}


\edit{We obtain the following corollary of \autoref{prop:oneKvar}.}

\begin{cor}\label{Varkpw_retract}
    $K^\square(\Var_{k,pw})$ is (up to weak equivalence) a retract of $K(\Var_k)$.
\end{cor}

\edit{
\begin{proof}
Since all closed immersions are proper, we have inclusions of squares categories 
$$\Var_k \rcofib \Var_{k,pw} \rcofib \Var_{k,w}$$
and by \autoref{prop:oneKvar}, the composition induces an equivalence of $K$-theory spectra
$K^\square(\Var_{k}) \simeq K^\square(\Var_{k,w})$. Thus $K^\square(\Var_{k,pw})$ is a retract of $K(\Var_k)$. 
\end{proof}
}



\section{Computation of \texorpdfstring{$K_0$}{K0}}

In this section we give a computation of $K_0^\square(\C)$ for a squares category  $\C$. In order to show that we do get the free abelian group on the objects, modulo exactly the relation that breaks up the square relation, we need to impose some extra conditions on the input squares category.

\begin{thm} \label{lem:K0norm}
  Let $\C$ be a squares category satisfying the following extra condition: 
  
  ($\ast$) For all objects $A,B\in \C$ there exists some object $X$ and
    distinguished squares
    \[
      \begin{inline-diagram}
        { O & A \\ B & X \\}; \cofib{1-1}{1-2} \cofib{2-1}{2-2} \cofib{1-1}{2-1}
        \cofib{1-2}{2-2} 
      \end{inline-diagram}\qqand
      \begin{inline-diagram}
        { O & B \\ A & X \\}; \cofib{1-1}{1-2} \cofib{2-1}{2-2} \fib{1-1}{2-1}
        \fib{1-2}{2-2}       
      \end{inline-diagram}.\]

  Then
  \[K_0(\C) \cong \Z\{\ob \C\}/\sim,\] where $\sim$ is the relation that
  $[O] = 0$ and for every distinguished square
  \[\begin{inline-diagram}
    { A & B \\ C & D \\};
    \cofib{1-1}{1-2} \cofib{2-1}{2-2} \fib{1-1}{2-1} \fib{1-2}{2-2}
  \end{inline-diagram}
  \qquad \hbox{we have} \qquad [A] + [D] = [B] + [C].\]
\end{thm}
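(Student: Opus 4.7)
The plan is to unpack $K_0^\square(\C) = \pi_1(|N_\dotp T_\dotp \C|, O)$ directly from the definition, write down a Tietze presentation of this fundamental group from the $2$-skeleton of the bisimplicial realization, and use condition $(\ast)$ at the very end to force commutativity.

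First, I would record the CW structure of $|N_\dotp T_\dotp \C|$: the $0$-cells are objects of $\C$; the $1$-cells are the horizontal morphisms (from $N_0 T_1$) together with the vertical morphisms (from $N_1 T_0$); and the attaching $2$-cells come from composable pairs in $\M_\C$, composable pairs in $\E_\C$, and distinguished squares (from $N_1 T_1$). Since $O$ is initial in $\E_\C$, the unique vertical edges $\iota_A^v \colon O \rfib A$ form a spanning tree of the $1$-skeleton, giving a presentation of $\pi_1$ with one generator $\alpha_e$ for each non-tree edge $e$ and one relation for each $2$-cell.

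Next, I would collapse this presentation using uniqueness of maps out of $O$. For any vertical $g\colon A \rfib B$, the composite $g \circ \iota_A^v$ must equal $\iota_B^v$ by initiality in $\E_\C$; feeding this into the corresponding $N_2 T_0$ relation kills $\alpha_g$ outright. For any horizontal $f\colon A \rcofib B$, the analogous composition with the unique $\iota_A^h\colon O \rcofib A$ in $\M_\C$ produces the relation $\alpha_f = \alpha_{\iota_A^h}^{-1} \alpha_{\iota_B^h}$. Setting $\ell_A := \alpha_{\iota_A^h}$ with $\ell_O = 1$, the group $\pi_1$ is generated by $\{\ell_A : A \in \ob \C\}$; the square relation $\alpha_f \alpha_{g'} = \alpha_g \alpha_{f'}$ attached to a distinguished square with corners $A,B,C,D$ collapses to $\ell_C \ell_A^{-1} = \ell_D \ell_B^{-1}$.

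The main obstacle is that this presentation still produces $\pi_1$ as a potentially non-abelian group, whereas the target $G = \Z\{\ob \C\}/\sim$ is abelian. This is exactly where condition $(\ast)$ enters: applying the square relation to the first square of $(\ast)$ yields $\ell_X = \ell_B \ell_A$, while applying it to the second yields $\ell_X = \ell_A \ell_B$, so any two generators commute and $\pi_1$ is abelian. Translating $\ell_C \ell_A^{-1} = \ell_D \ell_B^{-1}$ into additive notation then gives the four-term relation $[A] + [D] = [B] + [C]$, and the assignment $[A] \mapsto \ell_A$ realizes the desired isomorphism $G \xrightarrow{\sim} K_0^\square(\C)$.
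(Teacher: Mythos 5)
Your proposal is correct, and its core strategy coincides with the paper's: present $\pi_1(|N_\dotp T_\dotp\C|,O)$ by generators attached to edges and relations attached to $2$-cells, use initiality of $O$ in $\M_\C$ and $\E_\C$ to reduce all generators to classes $[A]=[O\to A]$, and invoke condition $(\ast)$ on the two squares with corner $X$ to get $\ell_X=\ell_A\ell_B=\ell_B\ell_A$ and hence commutativity. The difference is in which cell structure you use. You work with the iterated (product-cell) realization of the bisimplicial set, where the $1$-cells are the separate horizontal and vertical morphisms and the $2$-cells are atomic: composable pairs in $\M_\C$, composable pairs in $\E_\C$, and single distinguished squares. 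This buys you a clean Tietze presentation in which every relation is already accounted for once those three families are processed. The paper instead works with the diagonal, where the $1$-simplices are whole distinguished squares; it collapses the contractible subcomplex $T\cong N\M_\C$ and must then do an extra verification that a general diagonal $2$-simplex (a $3\times 3$ grid of objects) imposes no relation beyond those generated by its constituent squares --- this is the final computation $[F]+[H]+[B]+[D]=2[E]+[C]+[G]$ in the paper, which your cell structure renders unnecessary. The one thing you should make explicit is the standard identification of the diagonal realization with the iterated realization (so that your product CW structure really computes $\pi_1$ of the space in \autoref{def:Ksq}); with that remark added, your argument is complete, and your relation $\ell_C\ell_A^{-1}=\ell_D\ell_B^{-1}$ agrees with the boundary relation $\alpha_f\alpha_{g'}=\alpha_g\alpha_{f'}$ of the square cell after the vertical generators are killed.
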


\begin{proof}
  Let $X = |N_{\sbt} T_{\sbt} \C|$, so that $K(\C) = \Omega_OX$. The $1$-simplices of
  $X$ are exactly the distinguished squares of $\C$.  Let $T$ be the subspace of
  $X$ given by all $0$-simplices, and by the $1$-simplices of the form
  \begin{diagram}
    { A & B \\ A & B \\};
    \eq{1-1}{2-1} \eq{1-2}{2-2}
    \cofib{1-1}{1-2} \cofib{2-1}{2-2}
  \end{diagram}
  and all higher-dimensional simplices having these as its boundary.
  Then $T$ is contractible
  (since $T$ is isomorphic to the nerve of $\M_C$, and $O$ is initial in $\M_\C$) and thus $X/T \simeq X$.
  On the other hand, $X/T$ has only one $0$-simplex, so $\pi_1(X/T)$ is
  generated by all $1$-simplices of $X/T$.  Consider the $2$-simplex of $X$ of
  the form
  \begin{equation*}
    \begin{inline-diagram}
      {A & A & B \\
        C & C & D \\
        C & C & D \\};
      \eq{1-1}{1-2} \eq{2-1}{2-2} \eq{3-1}{3-2}
      \eq{2-1}{3-1} \eq{2-2}{3-2} \eq{2-3}{3-3}
      \fib{1-1}{2-1} \fib{1-2}{2-2} \fib{1-3}{2-3}
      \cofib{1-2}{1-3} \cofib{2-2}{2-3} \cofib{3-2}{3-3}
    \end{inline-diagram};
  \end{equation*}
  it induces the relation that
  \[\left[
      \begin{inline-diagram}
        { A & B \\ C & D \\}; \cofib{1-1}{1-2} \cofib{2-1}{2-2} \fib{1-1}{2-1} \fib{1-2}{2-2}
      \end{inline-diagram}\right] = \left[
      \begin{inline-diagram}
        { A & A \\ C & C \\}; \eq{1-1}{1-2} \eq{2-1}{2-2} \fib{1-1}{2-1} \fib{1-2}{2-2}
      \end{inline-diagram}\right];\]
  note that the square \edit{\sout{containing only cofibrations and equalities is contained}with equalities as vertical maps is}
  in $T$, and thus does not appear in the induced relation.  Considering an
  analogous argument applied to the square
  \begin{equation*}
    \begin{inline-diagram}
      {A & B & B \\
        A & B & B \\
        C & D & D \\};
      \eq{1-1}{2-1} \eq{1-2}{2-2} \eq{1-3}{2-3}
      \eq{1-2}{1-3} \eq{2-2}{2-3} \eq{3-2}{3-3}
      \cofib{1-1}{1-2} \cofib{2-1}{2-2} \cofib{3-1}{3-2}
      \fib{2-1}{3-1} \fib{2-2}{3-2} \fib{2-3}{3-3}
    \end{inline-diagram}
  \end{equation*}
  produces the relation
  \[\left[
      \begin{inline-diagram}
        { A & B \\ C & D \\}; \cofib{1-1}{1-2} \cofib{2-1}{2-2} \fib{1-1}{2-1} \fib{1-2}{2-2}
      \end{inline-diagram}\right] = \left[
      \begin{inline-diagram}
        { B & B \\ D & D \\}; \eq{1-1}{1-2} \eq{2-1}{2-2} \fib{1-1}{2-1} \fib{1-2}{2-2}
      \end{inline-diagram}\right].\]
  Therefore $\pi_1(X/T)$ is generated by squares \edit{\sout{containing only cofiber maps}whose horizontal maps are identities}; to
  save space, we denote such a square by $[A \rfib C]$.  The two relations above
  combine to give
  \[\makeshort{[A \rfib C] = [B \rfib D]}.\]
  Note that $X/T$ contains inside it the nerve of $\E_\C$ with all $0$-simplices
  identified.  Since $O$ is initial in $\E_{\C}$ we have the induced relation that
  for any \edit{\sout{cofiber}vertical} map $[A \rfib B]$,
  \[[O \rfib B][O \rfib A]^{-1} = [A \rfib B].\] Thus $\pi_1(X/T)$ is actually
  generated by generators of the form $[0 \rfib A]$ (all of which are
  nondegenerate, except $O$), which we write $[A]$ for short, and each
  distinguished square (as above) gives the relation that
  \begin{equation} \label{eq:relation} [C][A]^{-1} = [D][B]^{-1}.
  \end{equation} In order to prove the lemma it therefore
  remains only to prove that $\pi_1(X/T)$ is abelian.  This follows from
  condition (3); for any $A$, $B$ the existence of $X$ and the two squares
  implies that $[B][A^{-1}] = [X] = [A][B]^{-1}$, as desired.  From this point on, we therefore switch to additive notation, and (\ref{eq:relation}) becomes 
  \begin{equation}
      \label{eq:relationagain} [C]-[A] = [D]-[B]
  \end{equation}

  Thus we have shown that $\pi_1(X)$ is a quotient of the desired group.  It remains to check that there are no other relations induced by $2$-simplices.  For any $2$-simplex $\sigma$ we have the additional relation that $d_0\sigma + d_2\sigma = d_1\sigma$.  Thus consider a general $2$-simplex 
  \[\sigma = \begin{inline-diagram}
      { A & B & C \\ D & E & F \\ G & H & I \\};
      \cofib{1-1}{1-2} \cofib{1-2}{1-3} 
      \cofib{2-1}{2-2} \cofib{2-2}{2-3}
      \cofib{3-1}{3-2} \cofib{3-2}{3-3}
      \fib{1-1}{2-1} \fib{2-1}{3-1}
      \fib{1-2}{2-2} \fib{2-2}{3-2} 
      \fib{1-3}{2-3} \fib{2-3}{3-3}
  \end{inline-diagram}\]
  in $X$.  We have 
  \[d_0\sigma = \begin{inline-diagram} {E & F \\ H & I \\}; \cofib{1-1}{1-2} \cofib{2-1}{2-2} \fib{1-1}{2-1} \fib{1-2}{2-2} \end{inline-diagram}
  \qquad 
  d_1\sigma = \begin{inline-diagram} {A & C \\ G & I \\}; \cofib{1-1}{1-2} \cofib{2-1}{2-2} \fib{1-1}{2-1} \fib{1-2}{2-2} \end{inline-diagram}
  \qquad 
  d_2\sigma = \begin{inline-diagram} {A & B \\ D & E \\}; \cofib{1-1}{1-2} \cofib{2-1}{2-2} \fib{1-1}{2-1} \fib{1-2}{2-2} \end{inline-diagram}.\]
  In $\pi_1(X/T)$, using the relation (\ref{eq:relationagain}) to simplify , produces the equation 
  \[([E]+[I] - [F]-[H]) + ([A] + [E] - [B] - [D]) = [A] + [I] - [C] - [G].\]
  This simplifies to 
  \[[F] +[H] +[B] +[D] = 2[E] +[C] + [G].\]
  Using (\ref{eq:relationagain}) the two off-diagonal squares give the two relations 
  \[[D]+[H] = [E] + [G] \qqand [B] + [F] = [C] + [E];\]
  as the above is simply the sum of the two of these, we see that the relation induced by $\sigma$ does not induce any further relations in $\pi_1(X/T)$, and the result follows.
\end{proof}
\begin{remark} $\hbox{ }$
\begin{enumerate}
  \item If $\C$ is a squares category which does not satisfy condition ($\ast$) from \autoref{lem:K0norm}, then $K_0(\C)$ is still generated by the objects of $\C$ which satisfy the equivalence relation imposed by the squares. However, then we cannot ensure that $K_0(\C)$ is abelian anymore. There are interesting examples of squares categories with such nonabelian $K_0$-group.
\item  If the symmetric monoidal structure on $\C$ is given by finite coproducts, the empty coproduct by definition is an initial object $O$. If we require that the maps from the initial object to any other object are in both $\M_\C$ and $\E_\C$, then condition ($\ast$) in \autoref{lem:K0norm} is also automatically satisfied: by definition we are then requiring the following two squares to be distinguished 
 \[ \begin{inline-diagram} {O & A \\ O & A \\}; \cofib{1-1}{1-2} \cofib{2-1}{2-2} \eq{1-1}{2-1} \eq{1-2}{2-2} \end{inline-diagram}
  \qquad 
   \begin{inline-diagram} {O & O \\ B &  B \\}; \eq{1-1}{1-2} \eq{2-1}{2-2} \fib{1-1}{2-1} \fib{1-2}{2-2} \end{inline-diagram}\]
	and since squares are  required to preserve coproducts, the following squares are then also distinguished:
  \[ \begin{inline-diagram} {O & A \\ B & A\amalg B \\}; \cofib{1-1}{1-2} \cofib{2-1}{2-2} \fib{1-1}{2-1} \fib{1-2}{2-2} \end{inline-diagram}
  \qquad 
   \begin{inline-diagram} {O & A \\ B & B\amalg A \\}; \cofib{1-1}{1-2} \cofib{2-1}{2-2} \fib{1-1}{2-1} \fib{1-2}{2-2} \end{inline-diagram}.\]
   Reversing the roles of $A$ and $B$ proves condition $(\ast)$.

\end{enumerate}
\end{remark}

\section{Towards Bittner's Presentation of the Grothendieck spectrum of varieties}\label{sect:bittner}

\edit{Classically, the Grothendieck ring of varieties is defined as the free abelian group on isomorphism classes of varieties, subject to the relation $[X-Y]=[X]-[Y]$ for a closed subvariety $Y$ of $X$.} In \cite{bittner04}, Bittner proved an alternate presentation of the Grothendieck ring of varieties over a field with characteristic zero.  In particular, she proved the following:
\begin{thm}
    Let $k$ be a field of characteristic $0$.  Then the Grothendieck ring of varieties is generated by smooth projective varieties over $k$.  The only relation necessary is that for any square
    \begin{diagram}
        { E & \mathrm{Bl}_Y X \\ Y & X \\};
        \cofib{1-1}{1-2} \cofib{2-1}{2-2} \to{1-1}{2-1} \to{1-2}{2-2}
    \end{diagram}
    where $Y \rcofib X$ is a closed immersion, $\mathrm{Bl}_YX$ is the blowup of $Y$ along $X$, and $E$ is the exceptional divisor, there is a relation
    \[[X] - [Y] = [\mathrm{Bl}_Y X] - [E].\]
\end{thm}
Note that this is a four-term relation which is expressed via a square, so it is amenable to a squares formulation.  However, such squares do not compose, either vertically or horizontally, so some modification is necessary in order to make it into a squares category.  The goal of this section is to construct several possible spectral analogues to this presentation and explore their relationships.
    
\begin{definition}
    An \emph{abstract blowup square} is a cartesian square 
    \begin{diagram}
        { E & Y \\ C & X \\};
        \cofib{1-1}{1-2} \cofib{2-1}{2-2}
        \to{1-1}{2-1} \to{1-2}{2-2}^f
    \end{diagram}
    where $f$ is proper, the horizontal morphisms are closed immersions, and $f$ induces an isomorphism $Y \backslash E \rto X \backslash C$.  A \emph{formal blowup square} is a cartesian square
    \begin{diagram}
        { A & B \\ C & D \\};
        \cofib{1-1}{1-2} \cofib{2-1}{2-2} \to{1-1}{2-1} \to{1-2}{2-2}
    \end{diagram}
    which can be represented as the outside square of a diagram
    \begin{diagram}
        { X_0 & X_1 & \cdots & X_n \\
          X_0' & X_1' & \cdots & X_n' \\};
          \cofib{1-1}{1-2} \cofib{1-2}{1-3} \cofib{1-3}{1-4}
          \cofib{2-1}{2-2} \cofib{2-2}{2-3} \cofib{2-3}{2-4}
          \to{1-1}{2-1} \to{1-2}{2-2} \to{1-4}{2-4}
    \end{diagram}
    in which every square is an abstract blowup square. 
\end{definition}
\begin{lemma} \label{lem:formal-piecewise}
    A formal blowup square
    \begin{diagram}
        { A & B \\ C & D \\};
        \cofib{1-1}{1-2} \cofib{2-1}{2-2}
        \to{1-1}{2-1} \to{1-2}{2-2}
    \end{diagram}
    induces a proper piecewise isomorphism $B \backslash A \rto D \backslash C$.
\end{lemma}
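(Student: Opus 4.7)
The plan is to exhibit the piecewise isomorphism structure by using the filtration supplied by the intermediate terms of the abstract-blowup-square decomposition. Write the given formal blowup square as the outside of a diagram of abstract blowup squares with $A = X_0$, $B = X_n$, $C = X_0'$, $D = X_n'$, and let $f \colon X := B\setminus A \to Y := D\setminus C$ denote the induced morphism. Define $Y_i := X_i' \setminus X_0'$ for $i = 0,\dots,n$. Each $Y_i$ is a closed subvariety of $Y$ (it is the intersection of the closed subvariety $X_i' \subset X_n'$ with the open $Y \subset X_n'$), and the $Y_i$ assemble into a filtration $\emptyset = Y_0 \hookrightarrow Y_1 \hookrightarrow \cdots \hookrightarrow Y_n = Y$ by closed immersions.

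The central computation is the identification $X \times_Y Y_i = X_i \setminus X_0$. By the pasting lemma for cartesian squares, since every small square in the defining diagram is cartesian, so is every rectangle built from them; in particular $X_0 = X_n \times_{X_n'} X_0'$ and $X_i = X_n \times_{X_n'} X_i'$. Pulling back $X_n \to X_n'$ along the open immersion $X_i' \setminus X_0' \hookrightarrow X_n'$ then yields
\[
X \times_Y Y_i \;=\; X_n \times_{X_n'} (X_i' \setminus X_0') \;=\; X_i \setminus X_0.
\]

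With this in hand, condition (1) of \autoref{def:piecewise} is automatic, since $Y_0 = \emptyset$ has empty pullback. For condition (2), using $X_0 \subseteq X_{i-1} \subseteq X_i$ one computes
\[
Y_i \setminus Y_{i-1} \;=\; X_i' \setminus X_{i-1}', \qquad (X_i \setminus X_0) \setminus (X_{i-1} \setminus X_0) \;=\; X_i \setminus X_{i-1},
\]
and the defining property of the $i$-th abstract blowup square gives an isomorphism $X_i \setminus X_{i-1} \to X_i' \setminus X_{i-1}'$, which is precisely the induced map on layers.

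The only point requiring care is the pullback bookkeeping—verifying that every intermediate rectangle in the decomposition is cartesian and that pulling back commutes with the relevant open-closed decompositions. This is routine via the pasting lemma and the hypothesis that the small squares are cartesian, so no deeper input beyond the abstract blowup square axioms is needed.
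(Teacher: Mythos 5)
Your proof is correct, but it takes a genuinely different route from the paper's. The paper argues by induction on the number of abstract blowup squares in the horizontal decomposition: the key step is a general gluing statement, namely that if two horizontally adjacent formal blowup squares each induce piecewise isomorphisms on complements (witnessed by arbitrary stratifications), then so does their composite, the witnessing stratification being obtained by concatenating the stratification of $E\setminus D$ with a shifted copy of the stratification of $F\setminus E$ inside $F\setminus D$. You instead bypass the induction entirely and read off a single explicit stratification of $D\setminus C$ of length $n$ directly from the intermediate terms, $Y_i = X_i'\setminus X_0'$, using the pasting lemma for cartesian squares to identify $X\times_Y Y_i$ with $X_i\setminus X_0$ and the defining isomorphism $X_i\setminus X_{i-1}\to X_i'\setminus X_{i-1}'$ of the $i$-th abstract blowup square on each layer; all of these identifications check out, and condition (1) of \autoref{def:piecewise} holds trivially since $Y_0=\emptyset$. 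Your argument is more direct and yields a cleaner, canonical stratification; the paper's argument establishes en route the slightly more general fact that the class of squares inducing piecewise isomorphisms on complements is closed under horizontal composition, which is of independent use but not needed for the lemma as stated.
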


\begin{proof}
    By definition, an abstract blowup square induces an isomorphism on the complements.  To prove the statement for formal blowup squares, we show that if we are given a diagram
    \begin{diagram}
        { A & B & C \\ D & E & F \\};
        \cofib{1-1}{1-2} \cofib{1-2}{1-3}
        \cofib{2-1}{2-2} \cofib{2-2}{2-3}
        \to{1-1}{2-1} \to{1-2}{2-2}^f \to{1-3}{2-3}^g
    \end{diagram}
    where each of the squares is a formal blowup square which induce proper piecewise isomorphisms $\tilde f:B \smallsetminus A \rto E \smallsetminus D$ and $\tilde g:C\smallsetminus B \rto F\smallsetminus E$, then the outside square induces a proper piecewise isomorphism $C \smallsetminus A \rto F \smallsetminus D$.  From this the desired result follows by induction.

    The fact that $\tilde f$ is a piecewise isomorphism means that there is a stratification $\emptyset = X_0 \subseteq \cdots \subseteq X_n = (B \smallsetminus A)$ such that $\tilde f|_{f^{-1}(X_i\smallsetminus X_{i-1})}$ is an isomorphism for all $i \geq 1$.  Similarly, there exists a stratification $\emptyset = Y_0 \subseteq \cdots \subseteq Y_m = (C \smallsetminus B)$ 
    such that $\tilde g|_{g^{-1}(Y_j \smallsetminus Y_{j-1})}$ is an isomorphism.

    To construct a stratification on $C \smallsetminus A$ we define
    \[Z_\ell \defeq \left\{\begin{array}{ll} 
        X_\ell & \hbox{if } 0 \leq \ell \leq n \\
        (C \smallsetminus A) \smallsetminus ((C \smallsetminus B) \smallsetminus Y_{\ell-n}) & \hbox{if } n+1 \leq \ell \leq m+n.
    \end{array}\right.\]
    Note that when $\ell=m+n$ this is exactly $C \smallsetminus A$.  Moreover, each inclusion is a closed immersion as it is the complement of the open immersion $(C\smallsetminus B) \smallsetminus Y_{\ell-n} \rto^\circ (C \smallsetminus B) \rto^\circ C \smallsetminus A$, so it is a valid stratification.
    For $\ell \leq n$, we have
    \[g|_{g^{-1}(Z_\ell \smallsetminus Z_{\ell-1})} = f|_{f^{-1}(X_\ell\smallsetminus X_{\ell-1})},\]
    because the right-hand square is a pullback square, and this is an isomorphism by the assumption on $f$.  On the other hand, for $n+1\leq \ell \leq m+n$ we have
    \[g|_{g^{-1}(Z_\ell \smallsetminus Z_{\ell-1})} = g|_{g^{-1}(Y_{\ell-n}\smallsetminus Y_{\ell-n-1}},\]
    which is an isomorphism by the assumption on $g$.  Thus the induced map is a piecewise-isomorphism, as desired.  It is proper because the underlying morphism is a pullback of a proper morphism.
\end{proof}

We note that abstract blowup squares are closed under vertical composition, but generally not under horizontal composition, so that all abstract blowup squares are formal blowup squares, but not vice versa.
  \begin{example}  Let $\mathbf{Comp}_k$ be the category of complete varieties over $k$. We consider $\mathbf{Comp}_k$ as squares category whose vertical morphisms are proper morphisms, horizontal morphisms are closed immersions, and the distinguished squares are the formal blowup squares (with the empty variety $\emptyset$ as the chosen $0$). 
\end{example}

 By \autoref{prop:oneKvar}, $K^\square(\Var_{k,w})\simeq K^\square(\Var_k)$, so in particular,  $K_0^\square(\Var_{k,w})$ is the underlying group of the Grothendieck ring of varieties. The group $K_0^\square(\Comp_k)$, which we can describe explicitly using \autoref{lem:K0norm}, is also isomorphic to the underlying group of the Grothendieck ring of varieties \cite[Proposition 7.10]{kuijper}. Therefore, a natural question to ask is if the spectra $K^\square(\Comp_k)$ and $K^\square(\Var_{k,w})$ are equivalent. 

\edit{Note that as squares categories, $\Comp_k$ is not a subcategory of $\Var_{k,w}$. Instead, we can hope to compare them at the level of the $T_{\sbt}$-construction.  However, if we tried to define a map on the level of $T_{\sbt}$-constructions $T_{\sbt} {\Comp_k}\to T_{\sbt} \Var_{k,w}$, sending an object 
$$X_0\rcofib X_1 \rcofib \dots \rcofib X_n$$ to
$$\emptyset \longrightarrow X_n\setminus X_{n-1} \xlongrightarrow{\circ} \dots \xlongrightarrow{\circ} X_n\setminus X_0,$$
this would fail to be simplicial. Instead we map to the simplicial category $w\tilde S_n \Var_{k,w}$ using the SW structure on $\Var_{k,w}$, since by \autoref{prop:oneKvar} the two constructions give equivalent spectra.
}
\begin{definition}
     Let
 \edit{\sout{$r_n: T_n\Comp_k \rto T_n\Var_{k,w}$}$r_n:T_n\Comp_{k}\to w\tilde S_n \Var_{k,w}$}
be the functor sending an object
    $$X_0 \rcofib X_1 \rcofib \dots \rcofib X_n $$
    to \edit{\sout{$\emptyset \longrightarrow X_n\setminus X_{n-1} \xlongrightarrow{\circ} \dots \xlongrightarrow{\circ} X_n\setminus X_0,$}}

    \begin{diagram}
        {\emptyset & X_n\smallsetminus X_{n-1} & X_{n}\smallsetminus X_{n-2} & \dots & X_n\smallsetminus X_0\\
        & \emptyset & X_{n-1}\smallsetminus X_{n-2} & \dots & X_{n-1}\smallsetminus X_0 \\ & &\dots & &  \dots\\
        & & & & X_{1}\smallsetminus X_0 \\
        & & & & \emptyset\\};
        \to{1-1}{1-2} \to{1-2}{1-3}^\circ \to{1-3}{1-4}^\circ \to{1-4}{1-5}^\circ
        \to{2-2}{1-2} \cofib{2-3}{1-3} \cofib{2-5}{1-5}
        \to{2-2}{2-3} \to{2-3}{2-4}^\circ \to{2-4}{2-5}^\circ \cofib{3-3}{2-3} \cofib{3-5}{2-5} \cofib{4-5}{3-5}         \cofib{5-5}{4-5}
    \end{diagram} 
\end{definition}

   
We observe that $r_n$ preserves coproducts. The functors $r_n$ assemble to a simplicial functor $r_{\sbt}:T{\edit{^\op}}_{\sbt}\Comp_k \to w\tilde{S}_{\sbt}\Var_{k,w}$, \edit{where $T^\op_{\sbt}\Comp_k$ is the opposite simplicial set, which has the same realization.} Unfortunately, the image of $r_n$ does not see enough of $w\tilde{S}_n\Var_{k,w}$ to allow for a comprehensive comparison.  However, the image factors through $w\tilde S_n\Var_{k,pw}$ (from \autoref{Varkpw}), and \edit{we will prove the following result, which states} that this induces an equivalence on realizations.

\begin{prop}\label{prop:KComp}
    The functor $r_{\sbt}$ induces an equivalence of $K$-theory spectra $K^\square(\Comp_k) \simeq K^\square(\Var_{k,pw})$.
\end{prop}

Therefore, by \autoref{Varkpw_retract}, we get the following conclusion.
\begin{cor}
    The $K$-theory spectrum $K^\square(\Comp_k)$ is a retract of $K(\Var_k)$.
\end{cor}

Before we prove \autoref{prop:KComp}, we introduce a helper object that will make the proof easier to analyze.  For any variety $A$ (not necessarily complete), let $\E_A$ be the category whose objects are pairs $(X \rcofib Y, \varphi:Y\smallsetminus X \rto A)$ of a closed immersion of complete varieties $X \rcofib Y$ and a proper piecewise isomorphism $Y \backslash X \rto A$.  A morphism
\[(\makeshort{X \rcofib Y, Y\backslash X \rto^\varphi A}) \rto (\makeshort{X' \rcofib Y', Y' \backslash X \rto^{\varphi'} A})\]
is represented by a diagram
\begin{diagram}
    {X & Y & Y\backslash X & A\\ X' & Y' & Y'\backslash X' & A \\ };
    \cofib{1-1}{1-2} \cofib{2-1}{2-2} 
    \to{1-1}{2-1} \to{1-2}{2-2} \to{1-3}{1-2}^\circ \to{2-3}{2-2}^\circ
    \to{1-3}{1-4}^\varphi \to{2-3}{2-4}^{\varphi'}
    \to{1-3}{2-3}
    \eq{1-4}{2-4}
\end{diagram}
where the square on the left is a pullback square, the map $Y\smallsetminus X \to Y'\smallsetminus X'$ is a (necessarily proper) piecewise isomorphism, and the right-hand square commutes.

The reason this category is currently of interest is the following:
\begin{lemma} \label{lem:EA}
    For any object $Y_*$ in \edit{\sout{$T_n\Var_{k,pw}$}$w\tilde S_n\Var_{k,pw}$}, the category \edit{\sout{$\E_{Y_n\smallsetminus Y_0}$}$\E_{Y_{0n}}$} is equivalent to the category $r_n/Y_*$.
\end{lemma}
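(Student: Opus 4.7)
The plan is to construct explicit quasi-inverse functors $F : \E_{Y_n \smallsetminus Y_0} \to r_n/Y_*$ and $G : r_n/Y_* \to \E_{Y_n \smallsetminus Y_0}$. Define $G$ by extracting endpoint data: send $(X_*, \sigma)$ to $(X_0 \rcofib X_n, \varphi)$, where $\varphi : X_n \smallsetminus X_0 \to Y_n \smallsetminus Y_0$ is the piecewise isomorphism arising from the composite distinguished square in $\sigma$ running from index $0$ to index $n$. The top-level vertical map $X_n \smallsetminus X_0 \to Y_n$ necessarily factors through $Y_n \smallsetminus Y_0$, since the pullback condition of that composite square forces its preimage of $Y_0$ to be piecewise empty, and the complement condition is exactly the statement that $\varphi$ is a piecewise isomorphism. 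Define $F$ in the opposite direction: given $(X \rcofib Y, \varphi)$, set $X_i := X \cup \overline{\varphi^{-1}(Y_i \smallsetminus Y_0)}^{Y}$ as a reduced closed subvariety of $Y$, and let $\sigma : r_n(X_*) \to Y_*$ have $i$th vertical component the composite $X_i \smallsetminus X_0 = \varphi^{-1}(Y_i \smallsetminus Y_0) \hookrightarrow Y_i \smallsetminus Y_0 \rcofib Y_i$.

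For well-definedness of $F$, each $X_i$ is complete (closed in the complete variety $Y$), and the inclusions $X_i \rcofib X_{i+1}$ are closed immersions; each adjacent square is distinguished in $\Var_{k,w}$ because the pullback condition holds on the nose ($(X_{i+1} \smallsetminus X_0) \times_{Y_{i+1}} Y_i = \varphi^{-1}(Y_i \smallsetminus Y_0) = X_i \smallsetminus X_0$) and the complement $X_{i+1} \smallsetminus X_i \to Y_{i+1} \smallsetminus Y_i$ is the restriction of the piecewise isomorphism $\varphi$. The identity $G \circ F = \mathrm{id}$ then holds on the nose. For $F \circ G$, starting from $(X_*, \sigma)$ and reconstructing $X'_i := X_0 \cup \overline{\varphi^{-1}(Y_i \smallsetminus Y_0)}^{X_n}$, the pullback condition of the composite distinguished square yields a piecewise isomorphism $X_i \smallsetminus X_0 \to \varphi^{-1}(Y_i \smallsetminus Y_0)$ that factors the inclusion $X_i \smallsetminus X_0 \hookrightarrow X_n \smallsetminus X_0$; since piecewise isomorphisms are bijective on points, both subsets coincide as reduced subvarieties of $X_n \smallsetminus X_0$, and hence $X_i = X'_i$.

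On morphisms, $G$ restricts a levelwise distinguished transformation to its endpoint formal blowup square. Conversely, a morphism $(X \to X', Y \to Y')$ in $\E_{Y_n \smallsetminus Y_0}$ induces restrictions $X_i \to X'_i$: the compatibility $\varphi = \varphi' \circ (Y \smallsetminus X \to Y' \smallsetminus X')$ forces the image of $X_i \smallsetminus X_0 = \varphi^{-1}(Y_i \smallsetminus Y_0)$ to land in $\varphi'^{-1}(Y_i \smallsetminus Y_0) = X'_i \smallsetminus X'_0$, and properness of $Y \to Y'$ ensures the image of $X_i$ itself is closed and so lies in $X'_i$. The main obstacle is then verifying that each adjacent square $(X_i \to X'_i,\ X_{i+1} \to X'_{i+1})$ is distinguished in $\Comp_k$, i.e.\ is a formal blowup square. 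The strategy is to express $Y \to Y'$ as a horizontal composition of abstract blowup squares and restrict each factor to the appropriate sublevel, exploiting \autoref{lem:formal-piecewise} to track the piecewise isomorphism on complements and to confirm that the restricted factors are themselves abstract blowup squares whose composite witnesses the required formal blowup.
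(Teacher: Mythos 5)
Your proposal is correct and takes essentially the same route as the paper: the same pair of quasi-inverse functors, namely restriction to the endpoints $(X_0\rcofib X_n,\varphi)$ in one direction and the filtration $X_i = X_0\cup\varphi^{-1}(Y_i\smallsetminus Y_0)$ (your closure formula agrees with this, since $\varphi^{-1}(Y_i\smallsetminus Y_0)$ is already closed in $Y\smallsetminus X$) in the other, with the same verifications that the adjacent squares are distinguished and that the composites are the identity up to the set-theoretic identification of $X_i\smallsetminus X_0$ with $\varphi^{-1}(Y_i\smallsetminus Y_0)$. The one step you leave as a sketch — that a morphism of $\E_{Y_n\smallsetminus Y_0}$ restricts to formal blowup squares between consecutive $X_i$'s, by intersecting the witnessing chain of abstract blowup squares with the filtration — is the correct strategy and is in fact elided entirely in the paper's own proof, so your write-up is, if anything, more explicit.
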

\begin{proof}

We fix an object \edit{ \sout{$ Y_0\to Y_1 \to\dots \to Y_n$}}
\begin{diagram}
        {\emptyset & Y_{01} & Y_{02} & \dots & Y_{0n}\\
        & \emptyset & Y_{12} & \dots & Y_{1n} \\ & &\dots & &  \dots\\
        & & & & Y_{n-1,n} \\
        &&&& \emptyset\\};
        \to{1-1}{1-2} \to{1-2}{1-3}^\circ \to{1-3}{1-4}^\circ \to{1-4}{1-5}^\circ
        \to{2-2}{1-2} \cofib{2-3}{1-3} \cofib{2-5}{1-5}
        \to{2-2}{2-3} \to{2-3}{2-4}^\circ \to{2-4}{2-5}^\circ \cofib{3-3}{2-3} \cofib{3-5}{2-5} \cofib{4-5}{3-5}    \cofib{5-5}{4-5}    
    \end{diagram} 
or $Y_*$ for short, in \edit{\sout{$T_n\Var_{k,pw}$}$w\tilde S_n\Var_{k,pw}$}.  We define $F: r_n/Y_* \rto \E_{\edit{Y_{0n}}}$ by taking an object $$X_0 \rcofib X_1 \rcofib \dots \rcofib X_n$$ in $T^{\op}_n \Comp_k$  with \edit{\sout{a diagram of distinguished squares DIAGRAM}}a proper piecewise isomorphism $\alpha_{0n}:X_n\smallsetminus X_0 \to Y_{0n}$ and induced proper piecewise isomorphisms $\alpha_{ij}:X_{n-i}\smallsetminus X_{n-j} \to Y_{ij}$, which form an object in $r_n/Y_*$,
  to the pair
  \[(X_0 \rcofib X_n, \alpha_{0n}: X_n \backslash X_0 \rto Y_{0n}).\]
On morphisms, this simply restricts a morphism to the $0$-th and $n$-th components; the fact that it commutes above $Y_*$ implies that these satisfy the desired compatibility. 

  We define $G: \E_{Y_{0n}} \rto r_n/Y_*$
  as follows. Given an object $(P\rcofib Q, Q\smallsetminus P \xrightarrow{\varphi} Y_{0n} )$, we define $Z_i$ to be the pullback
  \begin{diagram}
      {Z_i & Q\smallsetminus P \\ Y_{0i} & Y_{0n}\\};
      \to{1-1}{1-2}^\circ \to{2-1}{2-2}^\circ 
      \to{1-1}{2-1} \to{1-2}{2-2}^\varphi
  \end{diagram}
for $0\leq i \leq n$. Note that each $Z_i$ is open in $Q\smallsetminus P$ \edit{and therefore open in $Q$, and moreover has a proper piecewise isomorphism to $Y_{0i}$}. We set $X_i  = Q \smallsetminus Z_{n-i} $, \edit{which is closed in $Q$ and in particular complete. Note that $X_{n-i}\smallsetminus X_{n-j} = (Q\smallsetminus Z_j)\smallsetminus(Q \smallsetminus Z_i) = Z_j\smallsetminus Z_i$ has a proper piecewise isomorphism to $Y_{0j}\smallsetminus Y_{0i} = Y_{ij}$.} Therefore sequence of closed embeddings of complete varieties
$$P = X_0 \rcofib X_1 \rcofib \dots \rcofib X_n = Q$$ together with \edit{\sout{the diagram DIAGRAM}the restrictions of $\varphi$ to $X_{n-j}\smallsetminus X_{n-i}$} 
  gives an object in $r_n/Y_*$. 
 
  It remains to check that these are mutually inverse equivalences. That $FG \cong 1_{\E_{Y_n\smallsetminus Y_0}}$ follows since $G$ takes an object $(X\hookrightarrow Y,\varphi)$ to an object in $T_n^{\edit{\op}}\Comp_k$ with $X$ and $Y$ as first and last object respectively, and $F$ is just restriction to these objects.
  
 That $GF \cong 1_{r_n/Y_*}$ follows since $F$ remembers $X_0\hookrightarrow X_n$ and \edit{\sout{$\varphi:X_n\smallsetminus X_0 \to Y_n$}$\varphi:X_n\smallsetminus X_0 \to Y_{0n}$}, from this we recover $X_i$ as\edit{\sout{$X_n\smallsetminus\varphi^{-1}(Y_{n-i}\smallsetminus Y_0)$}$X_n \smallsetminus \varphi^{-1}(Y_{0,n-i)}$ }.
 \end{proof}

We are now ready  to prove \autoref{prop:KComp}.
\begin{proof}[Proof of \autoref{prop:KComp}]
Directly from the definition we see that $r_n: T^\edit{\op}_n\Comp_k \rto w\tilde S_n\Var_{k,pw}$ is a symmetric monoidal functor since the symmetric monoidal structure is given by disjoint union on both sides. By \autoref{Einftycriterion}, it suffices to show that each $r_n$ is an equivalence. 

  By Quillen's Theorem A, it suffices to check that for every object $Y_*$ of $T^{\edit{\op}}_n\Var_{k,pw}$, the comma category $r_n/Y_*$ is filtered and hence contractible.  By \autoref{lem:EA} it suffices to check that $\E_A$ is cofiltered (and therefore contractible) for all $A$. 
In order to show it is cofiltered, we need to prove that it is nonempty, that for any two objects there is another object above them, and that equalizing morphisms exist.

\noindent
\textbf{$\E_A$ is nonempty:} Let $B$ be an arbitrary complete variety such that there is an open immersion $A \rto^\circ B$.  Set $C = B\smallsetminus A$, so that there is a closed immersion $C \rcofib B$.  Then $(C \rcofib B, A \req A)$ is an object in $\E_A$, as desired.

\noindent\textbf{Any two objects have another object above them:} Suppose we are given two objects
\[(\makeshort{X \rcofib Y, Y \smallsetminus X \rto A}) \qqand (\makeshort{X' \rcofib Y', Y' \smallsetminus X' \rto A})\]
in $\E_A$.  Let $A'$ be the pullback
\begin{diagram}
    { A' & Y \smallsetminus X \\ Y' \smallsetminus X' & A\\};
    \arrowsquare{}{}{}{}
\end{diagram}
then $A'\to Y \smallsetminus X $ and $A'\to Y'\smallsetminus X'$ are proper piecewise isomorphisms. Indeed, applying the pullback pasting lemma enough times, one can check that a filtration on $A$ which exhibits $Y\smallsetminus X \to A$ as a piecewise isomorphism, can be pulled back to give a filtration on $Y'\smallsetminus X'$ exhibiting $A'\to Y'\smallsetminus X'$ as a piecewise isomorphism. 

Consider the map $A' \to Y \times Y'$ induced by $A' \rto Y$ and $A' \rto Y'$.  By Nagata's compactification theorem we can factorise this as an open immersion followed by a proper map
$$A' \rcofib Z \to Y \times Y' $$
where, by replacing $Z$ with the closure of the image of $A'$ in it, we may assume $A' \rto^\circ Z$ is an isomorphism onto a dense open subvariety. In particular this gives us a complete variety $Z$ with proper $p:Z\to Y$ and $p':Z \to Y'$, induced by the projections from $Y\times Y'$ down to $Y$ and $Y'$. 

Set $W = Z\smallsetminus A'$.  It suffices to show that the proper map $Z \rto Y$ induces a map $W \rto X$ (the statement for $Y'$ and $X'$ will follow analogously), so that there is a morphism 
\begin{diagram}
    { W & Z & A' & A \\ X & Y & Y\smallsetminus X & A. \\};
    \cofib{1-1}{1-2} \cofib{2-1}{2-2} \to{1-1}{2-1} \to{1-2}{2-2}
    \to{1-3}{1-2}_\circ \to{2-3}{2-2}_\circ \eq{1-4}{2-4}
    \to{1-3}{1-4} \to{1-3}{2-3} \to{2-3}{2-4}
\end{diagram}
To prove this, it suffices to check that $W \cong Z\times_Y X$.   Indeed, since $A'$ is dense in $Z$, and the middle vertical maps above are proper, the middle square
is a pullback square by \cite[Lemma 6.15]{kuijper}, and can thus be extended to a pullback square on the left, which is then by construction a formal blowup square, as desired.

\noindent
\textbf{Equalizing morphisms exist:}  Suppose we are given $f,g:\alpha \rto \beta$ in $\E_A$.  We want to show that there exists $h:\gamma \rto \alpha$ in $\E_A$ such that $hf = hg$. Write
\[\alpha = (\makeshort{X \rcofib Y, Y\smallsetminus X \rto A}) \qqand \beta=(\makeshort{X' \rcofib Y', Y' \smallsetminus X' \rto A}),\]
with the data of $f$ given by $f_X: X \rto X'$ and $f_Y: Y \rto Y'$ (and $g$ analogously). Since both $f_Y$ and $g_Y$ make
\begin{diagram}
    { Y & Y \smallsetminus X & A\\ Y' & Y' \smallsetminus X' & A\\};
    \to{1-2}{1-1}_\circ \to{2-2}{2-1}_\circ \to{1-2}{2-2}
    \to{1-2}{1-3} \to{2-2}{2-3} \eq{1-3}{2-3}
    \diagArrow{->,xshift=-10pt}{1-1}{2-1}_{f_Y}
    \diagArrow{->,xshift=10pt}{1-1}{2-1}^{g_Y}
\end{diagram}
commute, it follows that $f_Y$ and $g_Y$ coincide on $Y \smallsetminus X$. Therefore they must also coincide on the closure $Z\defeq\overline{Y\smallsetminus X}$, which is closed in the complete variety $Y$ and therefore complete.  Set $W = X\times_Y Z$, so that there is a well-defined morphism 
\begin{diagram}
    { W & Z & Y \smallsetminus X & A\\ X & Y & Y \smallsetminus X & A.\\};
    \cofib{1-1}{1-2}  \cofib{2-1}{2-2}
    \to{1-1}{2-1}_{h_X} \to{1-2}{2-2}^{h_Y}
    \to{1-3}{1-2}_\circ \to{2-3}{2-2}_\circ \eq{1-3}{2-3}
    \to{1-3}{1-4} \to{2-3}{2-4} \eq{1-4}{2-4}
\end{diagram}
By definition, $f_Yh_Y=g_Yh_Y$, and (since all relevant squares are pullback squares) it follows that $f_Xh_X = g_Xh_X$.  This gives the desired equalizing morphism.
\end{proof}

\begin{remark}
    The step in the proof that used the properness assumption on $\Var_{k,pw}$ was the ``Any two objects have another object above them'' step.  If this step could be proved without this assumption, the above proof would show that $K(\Comp_k) \simeq K(\Var_{k,w})$.
\end{remark}

\subsection*{Smoothness considerations}

Bittner's presentation needed only smooth projective varieties, and only the blowup squares, while we have been focusing on complete varieties and formal blowup squares.  To make a construction directly analogous to Bittner's we can also consider the squares category of blowup squares.  




\begin{example} \label{ex:varieties}
  Let $\Blow_k$ be the squares category on smooth and complete varieties, generated by blowup squares. Then its distinguished object is $\emptyset$, its horizontal morphisms are closed immersions, its vertical morphisms are compositions of blowups of varieties, and its distinguished squares are horizontal and vertical compositions of blowup squares. Recall that a blowup of a variety along itself is empty.  Thus the squares
  \begin{diagram}
      { \emptyset & X \\ Y & X \amalg Y \\};
      \cofib{1-1}{1-2} \cofib{2-1}{2-2}
      \fib{1-1}{2-1} \fib{1-2}{2-2}
  \end{diagram}
  are all valid squares in the category.  Thus the category $\Blow_k$ satisfies condition ($*$) in \autoref{lem:K0norm}, making $K_0(\Blow_k)$ abelian.
\end{example}

\begin{example}
  Let $\SmComp_k$ be the ``full double subcategory'' of $\Comp_k$ which contains all smooth complete varieties.  In other words, its objects are smooth complete varieties, its horizontal morphisms are closed immersions, its vertical morphisms are proper morphisms of varieties, and the distinguished squares formal blowup squares.
\end{example}

We observe that there is an inclusion of squares categories compatible with disjoint unions
$\Blow_k \to \SmComp_k \to \Comp_k$
and therefore these induce morphisms of spectra
$$K^\square(\Blow_k) \to K^\square(\SmComp_k) \to K^\square(\Comp_k).$$

\begin{lemma}
If $k$ has characteristic $0$, there are isomorphisms
    \[K_0(\Blow_k) \xrightarrow{\sim} K_0(\SmComp_k) \xrightarrow{\sim} K_0(\Comp_k) \xrightarrow{\sim} K_0(\Var_k)\]
    each given by sending a class $[X]$ to $[X]$. 
\end{lemma}

\begin{proof}
   By Bittner's Theorem \cite{bittner04}, the composition $K_0(\Blow_k) \rto K_0(\Var_k)$ is an isomorphism. 
    The rightmost map is an isomorphism because of Nagata compactification, see e.g. \cite[Proposition 7.10]{kuijper} .  This implies that the composition $K_0(\Blow_k) \rto K_0(\SmComp_k) \rto K_0(\Comp_k)$ is an isomorphism, and therefore the map $K_0(\Blow_k) \rto K_0(\SmComp_k)$ is monic and the map $K_0(\SmComp_k) \rto K_0(\Comp_k)$ is epic.  However, $K_0(\Blow_k) \rto K_0(\SmComp_k)$ is also surjective, since it is a quotient: both groups are generated by the smooth complete varieties.  Thus it is a bijection, and therefore an isomorphism.  Consequently, the map $K_0(\SmComp_k) \rto K_0(\Comp_k)$ must also be an isomorphism, as desired. 
\end{proof}

\begin{question}[Bittner's Presentation for $K(\Var_k)$] \label{conj:bittner}
  Which of the maps
  \[\setlen{1.5em}{K^\square(\Blow_k) \to K^\square(\SmComp_k) \to K^\square(\Comp_k)\simeq K^\square(\Var_{k,pw})\to K^\square(\Var_{k,w}) \simeq K(\Var_k)}\]
  are equivalences when $k$ has characteristic $0$? What about other base fields or schemes? If not, can we identify any of the homotopy fibers?
\end{question}

\begin{remark}
    Blowup squares have one important weakness relative to abstract blowup squares: they do not \emph{compose}, in the following sense.  Usually, the composition of two blowup maps with smooth centre is not another blowup map with a smooth centre, so it is not possible to stack two blowup squares ``one on top of the other.''  (This does not contradict the well-definedness of the category with squares structure, since the inclusion of the exceptional divisor $E \rcofib \mathrm{Bl}_Y X$ cannot appear as the lower horizontal morphism in a blowup square.)  Similarly, blowup squares generally cannot compose ``next to one another.''  This means that in $N_{\sbt}T_{\sbt}\Blow_k$, most of the nontrivial structure in simplices arises from automorphisms of varieties or from the group completion structure.  It seems unlikely that this would produce all of the same complication in higher homotopy groups that it is possible to construct using abstract blowup squares or simple piecewise-automorphisms of varieties.  It therefore feels more unlikely that the first map is an equivalence than that the second map is an equivalence. Moreover, Bittner's proof that $K_0(\Blow_k)
    \cong K_0(\Var_k)$ uses the weak factorisation theorem (\cite{weakfactorization1}, \cite{weakfactorization2}) in a crucial way. Therefore any proof that $K(\Blow_k)\cong K(\Var_k)$ should use weak factorization as well. 
\end{remark}

\section{A derived motivic measure} \label{sec:derived}

\edit{\sout{The identification of $K(\Var_k)$ with}The interpretation of the Grothendieck spectrum of varieties as }$K^\square(\Var_k)$ gives rise to new possibilities for constructing derived motivic measures. \edit{\sout{In particular, any lax monoidal functor of squares categories  $\Var_k\to \mathcal{C}$ induces a map of spectra $K(\Var_k)\to K^\square(\mathcal{C})$.}Naturally, any lax symmetric monoidal functor of squares categories $\mathcal{C}\to \mathcal{D}$ induces a map of spectra
$$K\edit{^\square}(\mathcal{C}) \to K^\square(\mathcal{D}).$$
Moreover, in \autoref{lem:map_on_$K$-theory} we show that in certain cases, a functor from the ambient category of a squares category $\mathcal{C}$ into an simplicial model category  $\mathcal{A}$ can also induce a map of spectra
$$K^\square(\mathcal{C})\to K^\infty(\mathcal{E}),$$ where $\mathcal{E}$ is the $\infty$-category presented by a suitable subcategory of $\mathcal{A}$.
We will apply this to construct maps out of $K^\square(\Var_k), K^\square(\Comp_k), K^\square(\SmComp_k)$ and $K^\square(\Blow_k)$. 
\sout{Similarly, maps out of $K^\square(\SmComp_k)$ and $K^\square(\Comp_k)$ can be constructed.} }

\begin{definition}
    Let $\C$ be a category, and suppose that $\D$ is a squares category such that $\M_\D$ and $\E_\D$ are subcategories of $\C$ and such that distinguished squares in $\D$ commute in $\C$.  Then we say that $\C$ is an \emph{ambient} category for $\D$. \edit{ If $P$ is a set of commutative squares in $\C$, and $\D$ is the minimal squares category for which the objects of $\M_\D$ and $\E_\D$ are the objects of $\C$, and in which all $P$-squares are distinguished, then $D$ is the squares category \emph{generated} by $P$. }
\end{definition}

\edit{\sout{A source of such functors is the following.}}Suppose $\mathcal{D}$ is a squares category with ambient category $\C$. One can consider the distinguished squares of $\D$, or a set of commutative squares in $\C$ that generate $\D$, as a cd-structure on $\mathcal{C}$, and consider the topology generated by it. Depending on whether $\C$ has a strict initial object or not, one may want to consider the associated topology in the sense of Voevodsky \cite{voevodsky}, or a slightly coarser variant of this topology, see \cite[Definition 2.3]{kuijper}. 
If the cd-structure is \textit{nice}, meaning that it is complete and regular (or c-complete and c-regular) and compatible with a dimension function (\cite[Definition 3.2]{kuijper}), then the following proposition applies.
\begin{proposition}[{\cite[Proposition 3.9]{voevodsky} and \cite[Proposition 4.7]{kuijper}}]\label{prop:nicecdstructures}
Let $\mathcal{C}$ be category equipped with a nice cd-structure, and let $\mathcal{E}$ be a complete $\infty$-category. Then a presheaf 
$$F:\mathcal{C}^\op \to \mathcal{E}   $$
is a hypersheaf for the associated (coarse) topology if and only if it sends distinguished squares to pullback squares. 
\end{proposition}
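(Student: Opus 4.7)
The plan is to establish both implications of the biconditional separately, with the nontrivial content residing in the converse. For the forward direction, assume $F$ is a hypersheaf and consider a distinguished square with corners $A$ (top-left), $B$ (top-right), $C$ (bottom-left), $D$ (bottom-right). By completeness of the cd-structure, the pair $\{B \to D,\ C \to D\}$ generates a covering sieve on $D$ in the associated topology, and the \v{C}ech nerve of this cover is controlled by the square, with $B \times_D C \simeq A$ and all higher fiber products degenerating accordingly. Hypersheafiness then forces $F(D)$ to be the limit over the \v{C}ech nerve, which collapses to the pullback $F(B) \times_{F(A)} F(C)$, so $F$ sends the square to a pullback in $\mathcal{E}$.

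For the converse, assume $F$ sends distinguished squares to pullback squares, and set out to show that for every $X$ and every hypercover $U_\bullet \to X$ in the associated topology, the map $F(X) \to \lim F(U_\bullet)$ is an equivalence. I would proceed by transfinite induction on the dimension function $\dim$. The base case handles objects of minimal dimension, where the only covering sieves are essentially trivial and the condition is automatic. For the inductive step at $X$ of dimension $n$, use completeness to refine $U_\bullet$ by a hypercover built iteratively from distinguished squares, use regularity to ensure this refinement iterates coherently and the refined diagram is cofinal, and use dimension-compatibility to ensure that in any distinguished square with $X$ in the bottom-right, the other three corners $A$, $B$, $C$ all have dimension strictly less than $n$. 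The pullback-square hypothesis on $F$ then decomposes the limit over the refined hypercover along the topmost square, producing three sub-limits indexed by hypercovers of $A$, $B$, $C$, each of which is handled by the inductive hypothesis.

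The main obstacle is the $\infty$-categorical bookkeeping needed to show that the iterated refinement by distinguished squares yields a cofinal sub-diagram of the original hypercover in such a way that the limit computation in $\mathcal{E}$ genuinely commutes with the pullback decomposition along each square, for an arbitrary complete target. Set-theoretically, this is exactly the content of \cite[Proposition 3.9]{voevodsky}, and its upgrade to complete $\infty$-categorical targets (together with the c-complete and c-regular variants needed when $\mathcal{C}$ lacks a strict initial object) is \cite[Proposition 4.7]{kuijper}. The plan is to invoke those results directly rather than reconstruct their proofs, since the present paper only needs the statement as a black box to convert functors of squares categories into hypersheaves.
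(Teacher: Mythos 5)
The paper offers no proof of this proposition at all---it is stated purely as a citation of \cite[Proposition 3.9]{voevodsky} and \cite[Proposition 4.7]{kuijper}---and your proposal ultimately does exactly the same, deferring to those references as a black box, so the two approaches coincide. One small caveat on your motivational sketch: the covering sieve generated by a distinguished square is covering by the very definition of the associated topology (not by completeness), and in Voevodsky's actual argument completeness is what lets every cover be refined by simple covers built from distinguished squares (the ``if'' direction), regularity is what controls the \v{C}ech nerve so that hypersheaves send squares to pullbacks (the ``only if'' direction), and the dimension function is what upgrades \v{C}ech descent to hyperdescent---but since you invoke the cited results directly, this shuffling of roles does not affect the outcome.
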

Let $\mathcal{D}$ be a squares category with ambient category $\mathcal{C}$, and let $P$ denote a set of generating distinguished squares for the squares category $\mathcal{D}$, considered as commutative squares in $\mathcal{C}$. For what follows, in fact it suffices that $P$ is contained in a nice cd-structure $P'$ on $\C$. Let 
$$y: \mathcal{C} \to \mathrm{PSh}(\mathcal{C};\mathcal{S})$$
denote the $(\infty,1)$-Yoneda embedding. Let
$$L:\mathrm{PSh}(\mathcal{C};\mathcal{S}) \to \mathrm{HSh}(\mathcal{C};\mathcal{S})$$ denote the localization from the \edit{$\infty$-category} of presheaves of spaces to the \edit{$\infty$-category} of hypersheaves for the topology generated by $P'$. Lastly, let $$\Sigma_+^ \infty: \mathrm{HSh}(\mathcal{C};\mathcal{S}) \to \mathrm{HSh}(\mathcal{C};\mathrm{Spectra})$$
denote the stablization functor from hypersheaves of spaces to hypersheaves of spectra. As a result of  \autoref{prop:nicecdstructures},  for 
  \begin{diagram}
    { A & B \\ C & D \\}; \to{1-1}{1-2} \to{2-1}{2-2} \to{1-1}{2-1}
    \to{1-2}{2-2}
  \end{diagram}
a square in $P'$, the induced square of representable hypersheaves
 \begin{diagram}
    { Ly_A & Ly_B \\ Ly_C & Ly_D \\}; \to{1-1}{1-2} \to{2-1}{2-2} \to{1-1}{2-1}
    \to{1-2}{2-2} 
  \end{diagram}
is a pushout square in $\mathrm{HSh}(\mathcal{C};\mathcal{S})$ (see \cite[Corollary 2.16]{voevodsky}). Since $\Sigma_+^\infty$ is a left adjoint, it preserves pushout squares. Therefore the composition
$$\Sigma_+^\infty Ly:\mathcal{C} \to \mathrm{HSh}(\mathcal{C};\mathrm{Spectra}) $$
sends squares in $P'$ to bicartesian squares in the stable $\infty$-category $\mathrm{HSh}(\mathcal{C};\mathrm{Spectra})$. Since squares in $P\subseteq P'$ generate the squares category $\D$, in fact all $\mathcal{D}$-squares are sent to bicartesian squares. 
\edit{\sout{This map induces a morphism on $K$-theory spectra by the following lemma.}}

\edit{Since $\D$-squares are used to construct the spectrum $K^\square(\D)$, and bicartesian squares are used to construct the $K$-theory spectrum of the $\infty$-category $\mathrm{HSh}(\mathcal{C};\mathrm{Spectra})$, one can expect that $\Sigma^\infty_+Ly$ induces a map of $K$-theory spectra $$K^\square(\D) \to K^\infty(\mathrm{HSh}(\mathcal{C};\mathrm{Spectra})).$$ To make this precise, we use the simplicial model category  $\textup{sPSh}(\D)^{\textup{stab}}$ as defined in \cite[Section 2]{jardine}, which presents $\mathrm{HSh}(\mathcal{C};\mathrm{Spectra})$.  The following lemma gives sufficient conditions for a functor from a squares category to a Waldhausen category given by a model category to induce a functor of $K$-theory spectra.}

\begin{lemma}\label{lem:map_on_$K$-theory}
    Let $\C$ be a small 1-category with a set of distinguished squares $P$ and a distinguished object $O$, and let $\D$ be the squares category generated by these squares. Let $\mathcal{A}$ be a simplicial model category with an initial object $\emptyset$, and let $\B$ be a small full subcategory of cofibrants which admits all homotopy pushouts and is a Waldhausen category via the model structure on $\mathcal{A}$. Let 
    $$f:\C \to \B$$
    be a functor (of simplicial categories, where $\C$ is considered a simplicial category via the inclusion of $\textup{Set}$ in $\textup{sSet}$) sending squares in $P$ to homotopy pushout squares and $O$ to $\emptyset$. Then there is an induced morphism on $K$-theory spectra 
    $$K^\square(\D) \to K^\infty(N(\B^{cf}))$$
     where $K^\infty(N(\mathcal{B}^{cf}))$ is the $K$-theory of the underlying $\infty$-category $N(\mathcal{B}^{cf})$ of $\mathcal{B}$.
\end{lemma}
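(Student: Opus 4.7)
The plan is to construct a functor of squares categories $\tilde f : \D' \to \C'_W$, where $\C'_W$ denotes the Waldhausen squares category associated to $\C$ as in \autoref{ex:wald}, and then to chain together \autoref{lem:space_functoriality}, \autoref{lem:wald}, and the standard comparison between the Waldhausen $K$-theory of a category of cofibrants in a simplicial model category and the $K$-theory of its underlying $\infty$-category.

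First I would use the functorial cofibration--trivial-fibration factorization provided by the simplicial model structure on $\mathcal{A}$ to replace $f$ by a functor $\tilde f : \D \to \C$ which agrees with $f$ up to weak equivalence and whose restriction to the subcategory $\M_{\D'}$ of horizontal maps sends every morphism to a cofibration in $\C$. Coherence along composable chains requires a small-object-argument style functorial factorization, available in any cofibrantly generated model category, or equivalently a Reedy-cofibrant replacement along the simplicial direction of $T_\bullet \D'$.

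Next, I would verify that $\tilde f$ defines a functor of squares categories $\D' \to \C'_W$. The basepoint condition is immediate from $\tilde f(O) = \emptyset$. For a distinguished square coming from $P$, its image under $\tilde f$ is a homotopy pushout in $\C$ by hypothesis; since the horizontal legs are now cofibrations, the homotopy pushout is computed by the strict pushout, so the canonical comparison map $\tilde f(B) \cup_{\tilde f(A)} \tilde f(C) \to \tilde f(D)$ is a weak equivalence, matching the definition of distinguished square in $\C'_W$. Identities and horizontal or vertical compositions of distinguished squares transfer to $\C'_W$ by the pasting lemma for homotopy pushouts. \autoref{lem:space_functoriality} then supplies a map $K^\square(\D') \to K^\square(\C'_W)$, \autoref{lem:wald} identifies the target with $K^W(\C)$, and the standard comparison between Waldhausen $K$-theory and $\infty$-categorical Waldhausen $K$-theory identifies $K^W(\C) \simeq K^\infty(N(\C^{cf}))$; composing these yields the desired morphism.

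The main obstacle is the first step: arranging the cofibrant replacement to produce an honest functor on $\M_{\D'}$ rather than merely a compatible family of zigzags, since functorial factorization applied to individual morphisms does not generally commute with composition on the nose. A cleaner alternative is to work $\infty$-categorically throughout: $f$ determines an $\infty$-functor $N(\D) \to N(\C^{cf})$ sending squares in $P$ to pushouts in $N(\C^{cf})$, since homotopy pushouts in $\mathcal{A}$ model pushouts in the underlying $\infty$-category, and then one constructs the map at the level of $T_\bullet$- or $S_\bullet$-constructions directly in the $\infty$-categorical setting, bypassing strictification entirely.
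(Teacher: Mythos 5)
Your proposal follows the same route as the paper's proof: view $f$ as a functor of squares categories from $\D'$ to the Waldhausen squares category of \autoref{ex:wald}, apply \autoref{lem:space_functoriality} and \autoref{lem:wald}, and finish with the Blumberg--Gepner--Tabuada comparison $K^W(\C)\simeq K^\infty(N(\C^{cf}))$. The one real difference is that you spend most of your effort on a strictification step --- replacing $f$ so that the horizontal maps of $\D'$ land in genuine cofibrations, which is needed for the image squares to be literally distinguished in the sense of \autoref{ex:wald} --- whereas the paper's proof silently elides this point and simply asserts that $f$ ``can be considered as'' a functor of squares categories $\D'\to\C'$. Your caution is legitimate, and your observation that morphism-by-morphism functorial factorization does not commute with composition on the nose is exactly the technical sticking point; in the paper's actual applications (\autoref{prop:derived_motivic_measure}) the issue is moot, since there the horizontal maps are monomorphisms of presheaves and hence already cofibrations in the relevant injective-type model structure, so in practice one either adds that hypothesis to the lemma or runs your Reedy/$\infty$-categorical variant. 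Apart from this, the two arguments coincide.
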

\begin{proof}
   Let $\mathcal{B}'$ be the squares category related to the Waldhausen category $\B$, according to Example \ref{ex:wald}. Then $f$ can be considered as a functor of squares categories
$$f:\D \to \mathcal{B}',$$
inducing a morphism of $K$-theory spectra
$$ K^\square(\D)\to K^\square(\mathcal{B}').$$
Lemma \ref{lem:wald} gives an equivalence $K^\square(\mathcal{B}')\simeq K^W(\mathcal{B})$. By \cite[Theorem 7.8]{blumberg_gepner_tabuada} there is an equivalence $K^W(\mathcal{B})\simeq K\edit{^\infty}(N(\mathcal{B}^{cf}))$, where $K\edit{^\infty}(N(\mathcal{B}^{cf}))$ is the $K$-theory of the $\infty$-category $N(\mathcal{B}^{cf})$.
\end{proof}

\edit{Let
$$\Sigma^\infty_+y:\C \to \textup{sPSh}(\D)^{\textup{stab}}$$ be the simplicial functor that sends an object in $\C$ to the simplicial presheaf of spectra represented by it. Then this functor presents the functor of $\infty$-categories $\Sigma^\infty_+Ly$. By the observations above, this functor satisfies the conditions of Lemma~\ref{lem:map_on_$K$-theory}.  Thus we obtain the desired map on $K$-theory.}


\edit{
For $\C$ any site, let $\mathrm{HSh}(\mathcal{C};\mathrm{Spectra})^f$ denote the full subcategory of finite objects in $\mathrm{HSh}(\mathcal{C};\mathrm{Spectra})$, i.e., hypersheaves $F$ such that $\hom(F,-)$ commutes with directed colimits. We note that for $\C$ a site with a topology generated by a nice cd-structure, hypersheaves of the form $\Sigma_+^\infty Ly(X)$ are finite since they are representable, and any cover in $\D$ contains a finite (simple) subcover. 
\begin{cor}\label{cor:sheaves}
    Let $\C$ be a small 1-category with a set of distinguished squares $P$ and a distinguished object $O$, and let $\D$ be the squares category generated by $P$.  Assume that $P$ is included in a nice cd-structure $P'$ on $\C$. Then the Yoneda embedding induces a map of $K$-theory spectra
    $$K^\square(\D) \to K^\infty(\mathrm{HSh}(\mathcal{C};\mathrm{Spectra})^f).$$
\end{cor}
\begin{proof}
   We note that the functor of simplicial categories
    $$\Sigma^\infty_+y:\C \to \textup{sPSh}(\C)^{\textup{stab}} $$ lands in cofibrant objects. Let $\B$ denote the subcategory of $\textup{sPSh}(\C)^{\textup{stab}}$ generated by the initial presheaf and the image of $\Sigma^\infty_+y$ under homotopy pushouts. Then we can apply \autoref{lem:map_on_$K$-theory} to
    $$\Sigma^\infty_+y:\C\to \B$$
    obtain a map 
    $$K^\square(\D) \to K^\infty(N(\B^{cf})).$$
    Now we observe that $N(\B^{cf})$ embeds into $\mathrm{HSh}(\mathcal{C};\mathrm{Spectra})^f$, since $\B$ is generated by representable (hence finite) hypersheaves under pushouts. This embedding preserves pushouts, so there is a map on the $S_\infty$-construction that gives a map
$$K^\infty(N(\C^{cf})) \to K^\infty(\mathrm{HSh}(\mathcal{C};\mathrm{Spectra})^f).$$
\end{proof}

}

In the case of the squares categories $\Blow_k$, $\SmComp$ and $\Comp_k$, the generating squares (blowup squares of smooth and complete varieties, abstract blowup squares of smooth and complete varieties, and abstract blowup squares of complete varieties, respectively) form nice cd-structures, denoted $B$, $ASC$ and $AC$ respectively \cite[Proposition 6.8]{kuijper}. We consider the associated topologies $\tau_B$, $\tau_{ASC}$ and $\tau_{AC}$ on the categories $\Blow_k$, $\SmComp$ and $\Comp_k$. 

In the case of the squares category $\Var_k$, to our knowledge the distinguished squares can not be extended to a nice cd-structure on the category of varieties. However, consider the following category.
 \begin{definition}
    Let $\mathbf{Span}_k$ be the category with objects varieties over $k$, and as morphisms spans
$$X \lto^\circ U \rto Y $$
where $U\rto^\circ X$ is an open immersion and $U\rto Y$ a proper map. The composition of two spans $X\lto^\circ U \rto Y $ and $Y\lto^\circ V \rto Z $ is defined by taking the pullback of $U \to Y$ along $V \rto^\circ Y$.
\end{definition}
We now define a squares category with $\Span_k$ as ambient category. 
\begin{definition}
    Let $\Span_k$ denote the squares category with $\Span_k$ as ambient 1-category, maps of the form 
    $$X   \req X \rcofib^f Y $$ with $f$ a closed immersion as horizontal morphisms, and maps of the form 
    $$X \lto^\circ U \req U $$
    as vertical morphisms. The distinguished squares are squares of the form 
    \begin{diagram}
        {C & X \\ U \cap C & U \\};
        \to{1-1}{1-2} \to{2-1}{2-2} \to{2-1}{1-1}^\circ \to{2-2}{1-2}^\circ 
        \end{diagram}
        where $X  = U \cup C$.
\end{definition}

We note that this is morally the same squares category as $\Var_k$, only the orientation of the squares is different. In particular, $K^\square(\Span_k)$ is isomorphic to $K^\square(\Var_k)$, this can be seen by comparing the simplicial categories $T_{\sbt} \Span_k$ and $T_{\sbt} \Var_k$ . 

\begin{remark} $\hbox{ }$
\begin{enumerate}
    \item    The category $\mathbf{Span}_k^\op$ is in fact the category associated to the squares category $\Var_k$ according to Definition 2.5 in \cite{fiedorowicz_loday}; equivalently, $\Span_k$ is the category associated to the transpose squares category $\Var_k^t$. 
    \item  In $\Span_k$ the distinguished object $\emptyset$ is now terminal in the vertical arrows, so strictly speaking it is not a squares category anymore. However, with appropriate adjustments to the proofs, all the results about squares categories still hold. In particular we can construct the $K$-theory spectrum $K^\square(\Span_k)$ in the same way, and the presentation of $K_0(\Span_k)$ given by \autoref{lem:K0norm} still holds. 
\end{enumerate}
\end{remark}
On $\mathbf{Span}_k$ we consider the coarse topology $\tau^c_{A\cup L}$, generated by the cd-structure $A\cup L$. Here $A$ is the set of abstract blowup squares (of arbitrary, possibly non-complete varieties) and $L$ the set of squares of the form
\begin{diagram}
    {X\smallsetminus U & X \\ \emptyset & U. \\ };
\cofib{1-1}{1-2} \cofib{2-1}{2-2} \to{2-1}{1-1}^\circ \to{2-2}{1-2}_\circ    
\end{diagram}
Then $A\cup L$ is a nice cd-structure by \cite[Proposition 6.8]{kuijper}. 

\begin{rmk}
    The category $\mathbf{Span}_k$ is the natural domain for ``compactly supported cohomology theories" of algebraic varieties, since these are contravariant in proper morphisms and covariant in open immersions. For $\C$ a stable $\infty$-category with a t-structure and 
    $$F:\mathbf{Span}^\op \to \C$$
    a presheaf, such as the singular cochain functor, we can define a compactly supported cohomology theory by setting $H^{n}_c(X):= \pi_{-n}F(X)$. If $F$ is in the category $\mathrm{HSh}(\mathbf{Span}_k;\C)_\emptyset$ of hypersheaves for $\tau^c_{A\cup L}$ satisfying $F(\emptyset)=*$, then the sheaf condition implies the existence of a long exact sequence 
    $$\dots \to H_c^n(U) \to H_c^n(X) \to H_c^n(X\smallsetminus U) \to H^{n+1}_c(U)\to \dots $$
that is characteristic for compactly supported cohomology theories. Thus, we can view $$\mathrm{HSh}(\mathbf{Span}_k;\mathrm{Spectra})_\emptyset$$ as a category of compactly supported cohomology theories on varieties.
\end{rmk}

For $\D$ any site, let $\mathrm{HSh}(\mathcal{D};\mathrm{Spectra})^f$ denote the full subcategory of finite objects in $\mathrm{HSh}(\mathcal{D};\mathrm{Spectra})$, i.e., hypersheaves $F$ such that $\hom(F,-)$ commutes with directed colimits. We note that for $\D$ any of the categories $\Blow_k$, $\SmComp_k$, $\Comp_k$ or $\mathbf{Span}_k$, with the topology $\tau_B$, $\tau_{ASC}$, $\tau_{AC}$ or $\tau^c_{A\cup L}$, hypersheaves of the form $\Sigma_+^\infty Ly(X)$ are finite since they are representable, and any cover in $\D$ contains a finite (simple) subcover. 

\begin{prop}\label{prop:derived_motivic_measure}
    Let $\C$ be either of the categories $\Blow_k$, $\SmComp_k$, $\Comp_k$ or $\mathbf{Span}_k$, and let $\D$ be the associated squares category\edit{.\sout{$\Blow_k$, $\SmComp_k$, $\Comp_k$ or $\Span_k$.}} Then the functor
    $$\Sigma_+^\infty Ly:\mathcal{C} \to \mathrm{HSh}(\mathcal{C};\mathrm{Spectra}) $$
    induces a map of $K$-theory spectra
    $$\mu_{\D}:K^\square(\D) \to K^\infty(\mathrm{HSh}(\mathcal{C};\mathrm{Spectra})^f).$$
\end{prop}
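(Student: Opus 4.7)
The plan is to apply Lemma~\ref{lem:map_on_$K$-theory} with $f := \Sigma_+^\infty L y$, so the work reduces to (i)~producing a simplicial Waldhausen category $\C$ whose underlying $\infty$-category is equivalent to $\mathrm{HSh}(\D;\mathrm{Spectra})^f$ and whose ambient model category $\mathcal{A}$ receives the functor $\Sigma_+^\infty Ly$, and (ii)~verifying that $\Sigma_+^\infty Ly$ sends $O$ to an initial object and sends every square in $P$ to a homotopy pushout square.

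For (i), take $\mathcal{A}$ to be the projective model structure on simplicial presheaves of spectra on $\D$, left Bousfield localized at a generating set of $\tau$-hypercovers for $\tau\in\{\tau_B,\tau_{ASC},\tau_{AC},\tau^c_{A\cup L}\}$, so that $\mathcal{A}$ is combinatorial, proper, and simplicial, with underlying $\infty$-category $\mathrm{HSh}(\D;\mathrm{Spectra})$. Let $\C\subset \mathcal{A}$ be a small full subcategory of cofibrant objects that contains a cofibrant replacement of $\Sigma_+^\infty Ly(X)$ for every $X\in \D$ and is closed under homotopy pushouts and zero objects, arranged so that $N(\C^{cf})\simeq \mathrm{HSh}(\D;\mathrm{Spectra})^f$. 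Such a $\C$ exists because, in a compactly generated stable $\infty$-category, the compact objects form a small stable subcategory, and representables $Ly(X)$ together with their images under $\Sigma_+^\infty$ are compact, as $\Omega^\infty$ and the inclusion of hypersheaves into presheaves preserve filtered colimits (using that every $\tau$-cover admits a finite refinement, as noted before the proposition).

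For (ii), the pushout property is exactly the content of the discussion preceding the statement: by \autoref{prop:nicecdstructures} and \cite[Corollary~2.16]{voevodsky}, $Ly$ sends squares in the enclosing nice cd-structure to pushouts in $\mathrm{HSh}(\D;\mathcal{S})$, and $\Sigma_+^\infty$, being a left adjoint, preserves these pushouts; hence $\Sigma_+^\infty Ly$ sends squares in $P$ to bicartesian squares in $\mathrm{HSh}(\D;\mathrm{Spectra})$, which are homotopy pushouts in the Waldhausen category $\C$. For the basepoint, $O$ is initial in $\D$ (the empty variety; for $\D=\mathbf{Span}_k$ the initial/terminal distinction flips, which is harmless in view of the remark on $\Var_k^{\op,-}$), and the composite $\Sigma_+^\infty\circ L\circ y$ is a composite of left adjoints, so it carries the initial object to the zero object of $\mathrm{HSh}(\D;\mathrm{Spectra})$, matching the cofibrant replacement of $\emptyset$ in $\C$. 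Lemma~\ref{lem:map_on_$K$-theory} then produces the desired morphism $\mu_{\D'}\colon K^\square(\D')\to K^\infty(N(\C^{cf}))\simeq K^\infty(\mathrm{HSh}(\D;\mathrm{Spectra})^f)$.

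The main obstacle is genuinely technical rather than conceptual: it is the bookkeeping needed to pin down a small Waldhausen $\C$ whose $\infty$-categorical localization is exactly $\mathrm{HSh}(\D;\mathrm{Spectra})^f$ and which simultaneously receives $\Sigma_+^\infty Ly$ at the point-set level; for the case $\D'=\Var_k^{\op,-}$ one additionally has to check that Lemma~\ref{lem:map_on_$K$-theory} applies in the reversed-variance setting of the associated squares category, which is the type of adjustment flagged in the remark following the definition of $\Var_k^{\op,-}$.
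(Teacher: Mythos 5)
Your overall strategy matches the paper's: reduce everything to Lemma~\ref{lem:map_on_$K$-theory}, and verify the squares-to-pushouts condition via the cd-structure discussion preceding the proposition. But there is a genuine gap in the $\mathbf{Span}_k$ case. Your verification of the pushout condition rests entirely on the hypothesis that the distinguished squares $P$ of $\D'$ are contained in a nice cd-structure $P'$, so that \autoref{prop:nicecdstructures} applies. That holds for $\Blow_k$, $\SmComp_k$ and $\Comp_k$, but for $\D'=\Var_k^{\op,-}$ the distinguished squares are the open--closed decomposition squares of \autoref{ex:var2}, and these are \emph{not} contained in the cd-structure $A\cup L$ on $\mathbf{Span}_k$: the squares in $L$ are only the special case $C=X\smallsetminus U$ (where $U\cap C=\emptyset$), and the general decomposition squares are not abstract blowup squares either, since their vertical maps are open immersions rather than proper maps. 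The paper supplies the missing step: a $\mathrm{Spectra}$-valued hypersheaf for $\tau^c_{A\cup L}$ sends the squares in $A\cup L$ to cartesian squares, and \emph{because $\mathrm{Spectra}$ is stable} this forces it to send every decomposition square of \autoref{ex:var2} to a cartesian square as well, whence the corresponding squares of representables are cocartesian. Your remark that ``the initial/terminal distinction flips, which is harmless'' does not address this. A smaller but real issue is the basepoint: $y$ is not a left adjoint and $y(\emptyset)$ is not the initial presheaf, so ``a composite of left adjoints'' does not show that $\Sigma^\infty_+Ly(O)$ is the zero object; one needs the empty sieve to cover $\emptyset$ in the relevant topology, or one simply adjoins the initial presheaf to $\C$ by hand, as the paper does.

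On the choice of $\C$ your route genuinely differs from the paper's, and here your version demands more than is needed. The paper takes $\C$ to be the subcategory generated by the initial presheaf and the representables under homotopy pushouts, applies the lemma to land in $K^\infty(N(\C^{cf}))$, and then separately produces an exact embedding $N(\C^{cf})\hookrightarrow \mathrm{HSh}(\D;\mathrm{Spectra})^f$ (representables are finite because every cover admits a finite refinement, and finiteness is preserved under pushouts), which gives the second map on $K$-theory. You instead want to arrange $N(\C^{cf})\simeq \mathrm{HSh}(\D;\mathrm{Spectra})^f$ on the nose. Taking $\C$ to be a small model of \emph{all} finite objects would work and would make the last equivalence automatic, but your phrasing suggests the pushout-closure of the representables already exhausts the finite objects, which is unclear (finite objects are also closed under retracts and desuspensions). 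The paper's two-step factorization avoids having to identify the compact objects at all, at the cost of one extra exact functor.
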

\begin{proof}
\edit{
    For $\C$ either of the categories $\Blow_k$, $\SmComp_k$ or $\Comp_k$, \autoref{cor:sheaves} applies directly and gives the desired map of spectra. For $\mathbf{Span}_k$, we observe that a $\mathrm{Spectra}$-valued hypersheaf for this topology sends not only the squares in $A\cup L$, but also distinguished squares as in Example~\ref{ex:varieties}(3) to a cartesian squares, since $\mathrm{Spectra}$ is stable. Therefore, for such a distinguished square, the associated square of representable hypersheaves is cocartesian. Hence \autoref{lem:map_on_$K$-theory} and the arguments in the proof of \autoref{cor:sheaves} can be used to construct a map
$$\mu_{\Span_k}:K^\square(\mathbf{Span}_k) \to K^\infty(\mathrm{HSh}(\mathbf{Span}_k;\mathrm{Spectra})^f)$$
as well.}
\end{proof}

We denote by $\mathrm{HSh}(\Span_k;\mathrm{Spectra})_\emptyset^f$ the full subcategory of finite hypersheaves that send $\emptyset$ to the sphere spectrum. By mild abuse of notation, we denote by $\mu_{\Span_k}$ also the map 
$$ K^\square(\Span_k) \to K(\mathrm{HSh}(\mathbf{Span}_k; \mathrm{Spectra})_\emptyset^f)$$
induced by $\Sigma_+^\infty Ly$.

Combined with the results in Section \ref{sect:bittner} we get the following picture:
\begin{diagram}[4em]
    {K(\Var_k)  & K^\square(\Var_k) = K^\square(\Span_k) & & K(\mathrm{HSh}(\mathbf{Span}_k; \mathrm{Spectra})_\emptyset^f) \\ & K^\square(\Comp) & & K(\mathrm{HSh}(\Comp_k;\mathrm{Spectra})^f) \\ & K^\square(\SmComp_k) & & K(\mathrm{HSh}(\SmComp_k;\mathrm{Spectra})^f)\\
    & K^\square(\Blow_k) & & K(\mathrm{HSh}(\Blow_k;\mathrm{Spectra})^f)
    \\} ; 
   \to{2-2}{1-2}^{\mathrm{Prop}\ \ref{prop:KComp}}
    \we{1-1}{1-2}_{\mathrm{Lem\ }\ref{prop:oneKvar}}  \to{3-2}{2-2} \to{1-2}{1-4}^{\mu_{\Span_k}} \to{2-2}{2-4}^{\mu_{\Comp_k}} \to{3-2}{3-4}^{\mu_{\SmComp_k}}  \to{4-2}{4-4}^{\mu_{\Blow_k}} \to{4-2}{3-2} \eq{1-4}{2-4} \eq{2-4}{3-4} \eq{3-4}{4-4}
    \labar{1-2}{2-4}{\not\circlearrowleft}
    
\end{diagram}
where the equalities on the right hold because of \cite[Theorem 7.2]{kuijper}. Note that the lower two squares commute, whereas the upper square might not commute.

\bibliographystyle{alpha}
\bibliography{CZ}

\end{document}